\documentclass[a4paper,fleqn,11pt]{amsart}

\usepackage{mathrsfs}
\usepackage{cite}
\usepackage{amssymb}
\usepackage{graphicx}
\usepackage{xcolor}

\DeclareMathAlphabet{\mathpzc}{OT1}{pzc}{m}{it}

\newtheorem{theorem}{Theorem}[section]
\newtheorem{lemma}[theorem]{Lemma}
\newtheorem{claim}[theorem]{Claim}

\newtheorem{proposition}[theorem]{Proposition}
\theoremstyle{definition}
\newtheorem{definition}[theorem]{Definition}
\newtheorem{example}[theorem]{Example}
\theoremstyle{remark}
\newtheorem{remark}[theorem]{Remark}
\newtheorem{notation}[theorem]{Notation}

\numberwithin{equation}{section}

\allowdisplaybreaks[4]

\begin{document}

\title{Whitney regularity and Thom condition for families of non-isolated mixed singularities} 

\author{Christophe Eyral and Mutsuo Oka}
\address{C. Eyral, Institute of Mathematics, Polish Academy of Sciences, \'Sniadeckich~8, 00-656 Warsaw, Poland}
\email{eyralchr@yahoo.com}
\address{M. Oka, Department of Mathematics, Tokyo University of Science, 1-3 Kagurazaka, Shinjuku-ku, Tokyo 162-8601, Japan}   
\email{oka@rs.kagu.tus.ac.jp}

\subjclass[2010]{14J70, 14J17, 32S15, 32S25.}

\keywords{Deformation family of mixed singularities; Whitney equisingularity; non-compact Newton boundary; strong non-degeneracy; uniform local tameness; Whitney $(b)$-regularity; Thom $a_f$ condition.}

\begin{abstract}
We investigate the equisingularity question for $1$-parameter deformation families of mixed polynomial functions $f_t(\mathbf{z},\bar{\mathbf{z}})$ from the Newton polygon point of view. We show that if the members $f_t$ of the family satisfy a number of elementary conditions, which can be easily described in terms of the Newton polygon, then the corresponding family of mixed hypersurfaces $f_t^{-1}(0)$ is Whitney equisingular (and hence topologically equisingular) and satisfies the Thom condition.
\end{abstract}

\maketitle

\markboth{C. Eyral and M. Oka}{Whitney regularity and Thom condition for families of non-isolated mixed singularities} 

\section{Introduction}

Consider a $1$-parameter deformation family $\{f_t\}$ of mixed polynomial functions $f_t(\mathbf{z},\bar{\mathbf{z}})$, where $\mathbf{z}:=(z_1,\ldots,z_n)\in\mathbb{C}^n$, $\bar{\mathbf{z}}:=(\bar{z}_1,\ldots,\bar{z}_n)\in\mathbb{C}^n$ and $t\in\mathbb{C}$. (Here, $\bar{z}_i$ denotes the complex conjugate of $z_i$.) A central question in  equisingularity theory is to find easy-to-check conditions on the members $f_t$ of the family $\{f_t\}$ that guarantee that the corresponding family  of mixed hypersurfaces 
\begin{equation*}
V(f_t):=f_t^{-1}(0)\subseteq \mathbb{C}^n
\end{equation*}
is ``equisingular''---i.e., for all small $t$, the mixed singularities of $V(f_t)$ are ``equivalent'' (in a sense that should be duly specified) to those of $V(f_0)$ in a neighbourhood~of the origin $\mathbf{0}\in\mathbb{C}^n$. In the present paper, we investigate this question from the Newton polygon point of view. For example, in the special case of \emph{convenient} functions (i.e., functions whose Newton polygon intersects each coordinates axis), we prove that if for all sufficiently small $t$, the Newton polygon of $f_t$ is independent of $t$ and $f_t$ is strongly non-degenerate (i.e., the face functions defined by $f_t$ and the compact faces of the Newton polygon have no critical points on the torus $(\mathbb{C}^*)^n$), then the family of mixed hypersurfaces $\{V(f_t)\}$ is Whitney equisingular. That is, there exists a \emph{real} Whitney $(b)$-regular stratification of the mixed hypersurface 
\begin{equation*}
V(f):=f^{-1}(0)\subseteq \mathbb{C}\times\mathbb{C}^n
\end{equation*}
such that the $t$-axis is a stratum, where $f(t,\mathbf{z},\bar{\mathbf{z}})$ is the underlying mixed polynomial function defining the family $\{f_t\}$ (i.e., $f_t(\mathbf{z},\bar{\mathbf{z}})=f(t,\mathbf{z},\bar{\mathbf{z}})$). Here, $\mathbb{C}\times\mathbb{C}^n$ is identified with $\mathbb{R}^2\times\mathbb{R}^{2n}$, and $V(f)$ is understood as a real algebraic variety in $\mathbb{R}^2\times\mathbb{R}^{2n}$. Note that Whitney equisingularity is quite a strong form of equisingularity. For example, combined with the Thom-Mather first isotopy theorem, it implies that the family $\{V(f_t)\}$ is topologically equisingular (i.e., the local ambient topological type of $V(f_t)$ at $\mathbf{0}\in\mathbb{C}^n$  is constant as $t$ varies). 

Another regularity condition which is important to know how to detect is the Thom $a_f$ condition. We shall see that families of convenient strongly non-degenerate mixed polynomial functions with constant Newton polygon also satisfy this condi\penalty 10000 tion.

Strongly non-degenerate mixed polynomial functions which satisfy the ``convenience'' assumption have, at worst, an \emph{isolated} mixed singularity at the origin, and for this reason they behave quite well. For non-convenient functions and non-isolated mixed singularities, the equisingularity question is more subtle. 
Indeed, \emph{a priori}, for such functions, the strong non-degeneracy (which only controls the behaviour of the face functions corresponding to the \emph{compact} faces of the Newton polygon) seems to be insufficient to get the Whitney $(b)$-regularity and the Thom $a_f$ condition. In this case, we need an additional condition in order to control the behaviour of the face functions corresponding to the ``essential'' non-compact faces. Roughly this condition is as follows. Any non-compact face $\Delta$ has a ``non-compact direction'' which is characterized by a subset of indices $I_\Delta=\{i_1,\ldots,i_m\}\subseteq\{1,\ldots, n\}$. We say that $\Delta$ is \emph{essential} if the restriction of $f_t$ to the coordinates subspace
\begin{equation*}
\{(z_1,\ldots,z_n)\in\mathbb{C}^n\mid z_{i}=0 
\mbox{ if } i\notin I_\Delta\}
\end{equation*}
identically vanishes. The additional condition we require is as follows: for each $t$ and each essential non-compact face $\Delta$, we need that the face function $(f_t)_\Delta$ defined by $f_t$ and $\Delta$ is \emph{locally tame}---that is, for any fixed non-zero complex numbers $u_{i_1},\ldots, u_{i_m}$ close enough to the origin, $(f_t)_\Delta$ has  no critical point on the set 
\begin{equation*}
\{(z_1,\ldots,z_n)\in(\mathbb{C}^*)^n\mid z_{i_j} 
= u_{i_j} \mbox{ for } 1\leq j\leq m\}
\end{equation*}
as a function of the $n-m$ variables $z_{i_{m+1}},\ldots, z_{i_n}$. (Here, $i_{m+1},\ldots,i_n$ denote the elements of the set $\{1,\ldots, n\}$ which are not in $I_\Delta=\{i_1,\ldots,i_m\}$.) Moreover, we assume that this condition is satisfied \emph{uniformly} with respect to the parameter $t$---so-called \emph{uniform local tameness}. 
Our main result says that if for all sufficiently small $t$, the Newton polygon of $f_t$ is independent of $t$, if $f_t$ is strongly non-degenerate, and if the family of functions $\{f_t\}$ is uniformly locally tame, then the corresponding family of mixed hypersurfaces $\{V(f_t)\}$ is Whitney equisingular and satisfies the Thom  $a_f$ condition---even if the functions $f_t$ are not convenient and have non-isolated mixed singularities.
Note that the ``holomorphic'' counterpart of this result was proved by the authors in \cite{EO}. However, as mixed hypersurfaces do not carry any complex structure, the proof for the ``mixed'' case requires essential new arguments. 

The paper is organized as follows. In Section \ref{sect-ncnblt}, we recall important definitions and tools about mixed polynomial functions and mixed singularities. In Section~\ref{sect-ult}, we prove two important preliminary propositions (Propositions \ref{lemmasmooth} and \ref{lemma-transversality}) and we state our main result (Theorem \ref{mt2}). Finally, in Section \ref{pmt2}, we give the proof of Theorem \ref{mt2}.

\section{Strong non-degeneracy and local tameness}\label{sect-ncnblt}

In this section, we recall important definitions and tools about mixed polynomial functions which have been introduced by the second author in \cite{O7,O3,O1} and that will be used in the present paper.

\subsection{Mixed singularities}

Let $\mathbb{C}^n$ denotes the complex space generated by $n$ complex variables $(z_1,\ldots, z_n)$, and let $f(\mathbf{z},\bar{\mathbf{z}})$ be a complex-valued polynomial function of the variables $\mathbf{z}:=(z_1,\ldots, z_n)$ and $\bar{\mathbf{z}}:=(\bar{z}_1,\ldots,\bar{z}_n)$.   Write
\begin{equation*}
f(\mathbf{z},\bar{\mathbf{z}})=\sum_{\nu,\mu} c_{\nu,\mu}\, 
\mathbf{z}^\nu \bar{\mathbf{z}}^\mu,
\end{equation*}
where $\nu:=(\nu_1,\ldots,\nu_n)$ and $\mu:=(\mu_1,\ldots,\mu_n)$ are $n$-tuples of non-negative integers, $c_{\nu,\mu}$ is a complex coefficient (depending on $\nu$ and $\mu$), and $\mathbf{z}^\nu$ (respectively, $\bar{\mathbf{z}}^\mu$) is the monomial $z_1^{\nu_1}\cdots z_n^{\nu_n}$ (respectively, $\bar{z}_1^{\mu_1}\cdots \bar{z}_n^{\mu_n}$). 
Such a polynomial function $f(\mathbf{z},\bar{\mathbf{z}})$ is called a \emph{mixed} polynomial function of the variables $\mathbf{z}=(z_1,\ldots, z_n)$. The set
\begin{equation*}
V(f):=\{\mathbf{z}\in \mathbb{C}^n \mid f(\mathbf{z},\bar{\mathbf{z}})=0\}
\end{equation*}
is called the \emph{mixed ``hypersurface''} defined by $f$. Hereafter, we always assume that $f$ does not identically vanish near the origin $\mathbf{0}\in\mathbb{C}^n$ and we suppose that the coefficient $c_{\mathbf{0},\mathbf{0}}$ is zero so that $\mathbf{0}\in V(f)$. 

For every $1\leq i\leq n$, let $x_i:=\Re(z_i)$ and $y_i:=\Im(z_i)$ be the real and imaginary parts of $z_i$, respectively, and let $\mathbf{x}:=(x_1,\ldots,x_n)\in\mathbb{R}^n$ and  $\mathbf{y}:=(y_1,\ldots,y_n)\in\mathbb{R}^n$, so that $\mathbf{z}=\mathbf{x}+\sqrt{-1}\, \mathbf{y}$. Hereafter, the spaces $\mathbb{C}^n$ and $\mathbb{R}^{2n}$ are identified through the 1-1 correspondence $\mathbf{z}\in\mathbb{C}^n\mapsto \mathbf{z}_{\mathbb{R}}:=(\mathbf{x},\mathbf{y})\in \mathbb{R}^{2n}$. Under this identification, the Hermitian inner product in $\mathbb{C}^n$ (denoted by $\langle\cdot,\cdot\rangle$) and the Euclidean inner product in $\mathbb{R}^{2n}$ (denoted by $\langle\cdot,\cdot\rangle_{\mathbb{R}}$) are related by $\langle\mathbf{z}_{\mathbb{R}},\mathbf{z}'_{\mathbb{R}}\rangle_{\mathbb{R}}=\Re\langle\mathbf{z},\mathbf{z}'\rangle$.

If we divide $f(\mathbf{z},\bar{\mathbf{z}})$ into its real and imaginary parts, denoted by
\begin{equation*}
g(\mathbf{x},\mathbf{y}):=\Re (f(\mathbf{z},\bar{\mathbf{z}}))
\quad\mbox{and}\quad
h(\mathbf{x},\mathbf{y}):=\Im (f(\mathbf{z},\bar{\mathbf{z}})),
\end{equation*}
respectively, then we may view $f$ as the polynomial function $g(\mathbf{x},\mathbf{y})+\sqrt{-1}\, h(\mathbf{x},\mathbf{y})$ of $2n$ real variables $(\mathbf{x},\mathbf{y})=(x_1,\ldots,x_n,y_1,\ldots,y_n)$, and $V(f)$ may be understood as the real algebraic variety in $\mathbb{R}^{2n}$ defined by the equations $g(\mathbf{x},\mathbf{y})=h(\mathbf{x},\mathbf{y})=0$.  By abuse of notation, we may also write $f(\mathbf{x},\mathbf{y})=g(\mathbf{x},\mathbf{y})+\sqrt{-1}\, h(\mathbf{x},\mathbf{y})$. 
 Note that the real and imaginary parts of $f$ may also be viewed as the mixed polynomial functions 
\begin{equation*}
\mathbf{z}\mapsto g\biggl(\frac{\mathbf{z}+\bar{\mathbf{z}}}{2},\frac{\mathbf{z}-\bar{\mathbf{z}}}{2\sqrt{-1}}\biggr)
\quad\mbox{and}\quad
\mathbf{z}\mapsto h\biggl(\frac{\mathbf{z}+\bar{\mathbf{z}}}{2},\frac{\mathbf{z}-\bar{\mathbf{z}}}{2\sqrt{-1}}\biggr)
\end{equation*} 
respectively.
By abuse of notation, we still write $g(\mathbf{z},\bar{\mathbf{z}})$ for the mixed function $g((\mathbf{z}+\bar{\mathbf{z}})/2,(\mathbf{z}-\bar{\mathbf{z}})/2\sqrt{-1})=g(\mathbf{x},\mathbf{y})$. Similarly for $h$.

We say that a point $\mathbf{z}_0=\mathbf{x}_0+\sqrt{-1}\, \mathbf{y}_0\in\mathbb{C}^n$ is a \emph{critical point} of the mixed polynomial function $f\colon \mathbb{C}^n\to\mathbb{C}$, $\mathbf{z}\mapsto f(\mathbf{z},\bar{\mathbf{z}})$, if the vectors
\begin{align*}
dg(\mathbf{x}_0,\mathbf{y}_0):=\biggl(\frac{\partial g}{\partial x_1}(\mathbf{x}_0,\mathbf{y}_0),\ldots, \frac{\partial g}{\partial x_n}(\mathbf{x}_0,\mathbf{y}_0), \frac{\partial g}{\partial y_1}(\mathbf{x}_0,\mathbf{y}_0),\ldots, \frac{\partial g}{\partial y_n}(\mathbf{x}_0,\mathbf{y}_0)\biggr)
\end{align*}
and
\begin{align*}
dh(\mathbf{x}_0,\mathbf{y}_0):=\biggl(\frac{\partial h}{\partial x_1}(\mathbf{x}_0,\mathbf{y}_0),\ldots, \frac{\partial h}{\partial x_n}(\mathbf{x}_0,\mathbf{y}_0), \frac{\partial h}{\partial y_1}(\mathbf{x}_0,\mathbf{y}_0),\ldots, \frac{\partial h}{\partial y_n}(\mathbf{x}_0,\mathbf{y}_0)\biggr)
\end{align*}
of $\mathbb{R}^{2n}$ are linearly dependent over $\mathbb{R}$. 
By \cite[Proposition 1]{O7} (see also \cite{O3,O1}),
the above condition is equivalent to anyone of the following two conditions:
\begin{enumerate}
\item
the vectors $\bar {\partial} g (\mathbf{z}_0,\bar{\mathbf{z}}_0)$ and $\bar {\partial} h (\mathbf{z}_0,\bar{\mathbf{z}}_0)$ of $\mathbb{C}^{n}$ are linearly dependent over $\mathbb{R}$;
\item
there exists a complex number $\lambda$, with $\vert\lambda\vert=1$, such that 
\begin{equation*}
\qquad \overline{\partial f(\mathbf{z}_0,\bar{\mathbf{z}}_0)} =
\lambda \bar {\partial} f(\mathbf{z}_0,\bar{\mathbf{z}}_0).
\end{equation*}
\end{enumerate}
Here, if $k(\mathbf{z},\bar{\mathbf{z}})$ is any mixed polynomial function, then the vectors $\partial k(\mathbf{z},\bar{\mathbf{z}})$ and $\bar{\partial} k(\mathbf{z},\bar{\mathbf{z}})$ are defined by
\begin{equation*}
\left\{
\begin{aligned}
& \partial k(\mathbf{z},\bar{\mathbf{z}}):=\biggl(\frac{\partial k}{\partial z_1}(\mathbf{z},\bar{\mathbf{z}}),\ldots, \frac{\partial k}{\partial z_n}(\mathbf{z},\bar{\mathbf{z}})\biggr),
\\
& \bar{\partial} k(\mathbf{z},\bar{\mathbf{z}}):=\biggl(\frac{\partial k}{\partial \bar{z}_1}(\mathbf{z},\bar{\mathbf{z}}),\ldots, \frac{\partial k}{\partial \bar{z}_n}(\mathbf{z},\bar{\mathbf{z}})\biggr).
\end{aligned}
\right.
\end{equation*}
Hereafter, we shall often use the simplified notation $\overline{\partial k}(\mathbf{z},\bar{\mathbf{z}}):=\overline{\partial k(\mathbf{z},\bar{\mathbf{z}})}$.  One must not confuse $\overline{\partial k}(\mathbf{z},\bar{\mathbf{z}})$ with $\bar{\partial} k(\mathbf{z},\bar{\mathbf{z}})$. Note that in the special case where $k$ is a \emph{real-valued} mixed polynomial function, we have $\overline{\partial k}(\mathbf{z},\bar{\mathbf{z}})=\bar{\partial}k(\mathbf{z},\bar{\mathbf{z}})$.
Coming back to $f=g+\sqrt{-1}h$, we recall that 
\begin{equation*}
\left\{
\begin{aligned}
& \frac{\partial f}{\partial z_i} (\mathbf{z},\bar{\mathbf{z}}) = 
\frac{\partial g}{\partial z_i}(\mathbf{z},\bar{\mathbf{z}}) +\sqrt{-1} \frac{\partial h}{\partial z_i} (\mathbf{z},\bar{\mathbf{z}}),
\\
& \frac{\partial f}{\partial \bar{z}_i}(\mathbf{z},\bar{\mathbf{z}}) = 
\frac{\partial g}{\partial \bar{z}_i}(\mathbf{z},\bar{\mathbf{z}}) +\sqrt{-1} \frac{\partial h}{\partial \bar{z}_i}(\mathbf{z},\bar{\mathbf{z}}),
\end{aligned}
\right.
\end{equation*}
where
\begin{equation*}
\left\{
\begin{aligned}
& \frac{\partial g}{\partial z_i} (\mathbf{z},\bar{\mathbf{z}}) = \frac{1}{2} \Bigl( \frac{\partial g}{\partial x_i}(\mathbf{x},\mathbf{y}) -\sqrt{-1} \frac{\partial g}{\partial y_i}(\mathbf{x},\mathbf{y}) \Bigr), \\
& \frac{\partial g}{\partial \bar{z}_i} (\mathbf{z},\bar{\mathbf{z}}) = \frac{1}{2} \Bigl( \frac{\partial g}{\partial x_i}(\mathbf{x},\mathbf{y}) +\sqrt{-1} \frac{\partial g}{\partial y_i}(\mathbf{x},\mathbf{y}) \Bigr),
\end{aligned}
\right.
\end{equation*}
and similarly for $h$.

If $\mathbf{z}_0$ is a critical point of $f$, and if furthermore $\mathbf{z}_0\in V(f)$, then we say that $\mathbf{z}_0$ is a \emph{mixed singular point} or a \emph{mixed singularity} of the mixed hypersurface $V(f)$. 
We say that $V(f)$ is \emph{mixed non-singular} if it has no mixed singular point. If $V(f)$ is mixed non-singular, then, necessarily, it is a smooth algebraic variety of real codimension two (cf.~\cite[\S 3.1]{O3}). Note that a singular point of $V(f)$ as a point of the real algebraic variety $V(f)$---that is, a singular point of $V(f)$ understood as the variety $\{(\mathbf{x},\mathbf{y})\in\mathbb{R}^{2n}\, ;\, g(\mathbf{x},\mathbf{y})=h(\mathbf{x},\mathbf{y})=0\}$---is always a mixed singular point of the mixed hypersurface $V(f)$. However the converse is not true.
For example, every point of the sphere  $z_1\bar{z}_1+\cdots + z_n\bar{z}_n=1$ is a mixed singular point. 

The tangent space $T_{\mathbf{a}}V(g)$ at a mixed non-singular point $\mathbf{a}\in \mathbb{C}^{n}$ of the mixed hypersurface $V(g)$ defined by the real-valued mixed function $g(\mathbf{z},\bar{\mathbf{z}})$ is the real subspace of $\mathbb{C}^{n}$ given by
\begin{align*}
T_{\mathbf{a}}V(g) & =\{\mathbf{z}_{\mathbb{R}}\in\mathbb{R}^{2n}\mid \langle \mathbf{z}_{\mathbb{R}}, dg(\mathbf{a}_{\mathbb{R}}) \rangle_{\mathbb{R}}=0\} \\
& = \{\mathbf{z}\in\mathbb{C}^{n}\mid \Re\langle \mathbf{z}, \overline{\partial g}(\mathbf{a},\bar{\mathbf{a}}) \rangle=\Re\langle \mathbf{z}, \bar{\partial} g(\mathbf{a},\bar{\mathbf{a}}) \rangle=0\}.
\end{align*}
In other words, $T_{\mathbf{a}}V(g)$ is the real subspace of $\mathbb{C}^n$ whose vectors are orthogonal in $\mathbb{R}^{2n}$ to the vector $\bar{\partial}g(\mathbf{a},\bar{\mathbf{a}})$.
Similarly for $T_{\mathbf{a}}V(h)$. The tangent space $T_{\mathbf{a}}V(f)$ at a mixed non-singular point $\mathbf{a}\in \mathbb{C}^{n}$ of the mixed hypersurface $V(f)$ defined by the complex-valued mixed function $f(\mathbf{z},\bar{\mathbf{z}})$ is the real subspace of $\mathbb{C}^{n}$ whose vectors are orthogonal in $\mathbb{R}^{2n}$ to the vectors $\bar{\partial}g(\mathbf{a},\bar{\mathbf{a}})$ and $\bar{\partial}h(\mathbf{a},\bar{\mathbf{a}})$. That is,
\begin{align*}
T_{\mathbf{a}}V(f)=
\bar{\partial}g(\mathbf{a},\bar{\mathbf{a}})^{\bot}
\cap \bar{\partial}h(\mathbf{a},\bar{\mathbf{a}})^{\bot},
\end{align*}
where $\bar{\partial}g(\mathbf{a},\bar{\mathbf{a}})^{\bot}$ (respectively, 
$\bar{\partial}h(\mathbf{a},\bar{\mathbf{a}})^{\bot}$) denotes the real orthogonal complement of the real subspace of $\mathbb{C}^n$ generated by 
$\bar{\partial}g(\mathbf{a},\bar{\mathbf{a}})$ 
(respectively, by $\bar{\partial}h(\mathbf{a},\bar{\mathbf{a}})$).

\subsection{Non-compact Newton boundary}
The \emph{Newton polygon} of $f(\mathbf{z},\bar{\mathbf{z}})$ at the origin (denoted by $\Gamma_{\! +}(f;\mathbf{z},\bar{\mathbf{z}})$) is the convex hull in $\mathbb{R}_+^n$ of the set
\begin{equation*}
\bigcup_{c_{\nu,\mu}\not=0} ((\nu+\mu)+\mathbb{R}_+^n),
\end{equation*}
where $\mathbb{R}_+^n=\{\xi:=(\xi_1,\ldots, \xi_n)\in \mathbb{R}^n\mid \xi_i\geq 0 \hbox{ for } 1\leq i\leq n\}$. 
The \emph{Newton boundary} of $f(\mathbf{z},\bar{\mathbf{z}})$ at the origin (denoted by $\Gamma(f;\mathbf{z},\bar{\mathbf{z}})$) is the union of the \emph{compact} faces of $\Gamma_{\! +}(f;\mathbf{z},\bar{\mathbf{z}})$. 
Note that if $f$ is a complex analytic function, then these definitions coincide with the standard ones.

For any system of weights $\mathbf{w}:=(w_1,\ldots,w_n)\in\mathbb{N}^n\setminus\{\mathbf{0}\}$, there is a linear map $\mathbb{R}^n\to\mathbb{R}$ given by
\begin{equation*}
\xi:=(\xi_1,\ldots, \xi_n)\mapsto\sum_{1\leq i\leq n} \xi_i w_i.
\end{equation*}
Let $l_\mathbf{w}$ be the restriction of this map to $\Gamma_{+}(f;\mathbf{z},\bar{\mathbf{z}})$, let $d_\mathbf{w}$ be the minimal value of $l_\mathbf{w}$, and let 
 $\Delta_\mathbf{w}$ be the (possibly non-compact) face of $\Gamma_{+}(f;\mathbf{z},\bar{\mathbf{z}})$ defined by the locus where $l_\mathbf{w}$ takes this minimal value. It is easy to see that
\begin{equation*}
d_\mathbf{w}=\mbox{min} \{ \mbox{rdeg}_{\mathbf{w}}
(\mathbf{z}^\nu \bar{\mathbf{z}}^\mu) \mid c_{\nu,\mu}\not=0\},
\end{equation*}
where $\mbox{rdeg}_{\mathbf{w}}(\mathbf{z}^\nu \bar{\mathbf{z}}^\mu)$ is the \emph{radial degree} of the monomial $\mathbf{z}^\nu \bar{\mathbf{z}}^\mu$ with respect to the weights $\mathbf{w}$, which is defined by
\begin{equation*}
\mbox{rdeg}_{\mathbf{w}}(\mathbf{z}^\nu \bar{\mathbf{z}}^\mu):=\sum_{1\leq i\leq n} w_i(\nu_i+\mu_i)=l_\mathbf{w}(\nu+\mu).
\end{equation*}
Note that $\Delta_\mathbf{w}=\{\xi\in \Gamma_{+}(f;\mathbf{z},\bar{\mathbf{z}})\mid l_\mathbf{w}(\xi)=d_\mathbf{w}\}$, and if $w_i>0$ for each $1\leq i\leq n$, then $\Delta_\mathbf{w}$ is a (compact) face of the Newton boundary $\Gamma(f;\mathbf{z},\bar{\mathbf{z}})$.

\begin{notation}
For any subset $I\subseteq\{1,\ldots, n\}$, we set
\begin{align*}
& \mathbb{C}^I:=\{(z_1,\ldots, z_n)\in \mathbb{C}^n\mid z_i=0 \mbox{ if } i\notin I\}, \\
& \mathbb{C}^{*I}:=\{(z_1,\ldots, z_n)\in \mathbb{C}^n\mid z_i=0 \mbox{ if and only if } i\notin I\}.
\end{align*}
In particular, $\mathbb{C}^{\emptyset}=\mathbb{C}^{*\emptyset}=\{\mathbf{0}\}$ and $\mathbb{C}^{*\{1,\ldots,n\}}=(\mathbb{C}^*)^n$. (As usual, $\mathbb{C}^*:=\mathbb{C}\setminus \{\mathbf{0}\}$.)
\end{notation}

\begin{definition}\label{def-mnbencf}
The \emph{non-compact Newton boundary} of~$f(\mathbf{z},\bar{\mathbf{z}})$ at the origin (denoted by $\Gamma_{nc}(f;\mathbf{z},\bar{\mathbf{z}})$) is obtained from the usual Newton boundary $\Gamma(f;\mathbf{z},\bar{\mathbf{z}})$ by adding the ``essential'' non-compact faces of $\Gamma_+(f;\mathbf{z},\bar{\mathbf{z}})$:
\begin{align*}
\Gamma_{nc}(f;\mathbf{z},\bar{\mathbf{z}}):=\Gamma(f;\mathbf{z},\bar{\mathbf{z}})
\cup \{\mbox{``essential'' non-compact faces}\}.
\end{align*}
Here, a non-compact face $\Delta$ of $\Gamma_+(f;\mathbf{z},\bar{\mathbf{z}})$ is said to be \emph{essential} if there are weights $\mathbf{w}:=(w_1,\ldots,w_n)\in\mathbb{N}^n\setminus \{\mathbf{0}\}$ such that the following three conditions hold:
\begin{enumerate}
\item[(i)]
$\Delta=\Delta_\mathbf{w}$ (i.e., $\Delta$ is the face defined by the locus where $l_\mathbf{w}$ takes its minimal value);
\item[(ii)]
$f_{\mid \mathbb{C}^{I_\mathbf{w}}}\equiv 0$, where $I_\mathbf{w}:=\bigl\{i\in\{1,\ldots,n\}\ ;\, w_i=0\bigr\}$;
\item[(iii)]
for any $i\in I_\mathbf{w}$ and any point $\alpha\in\Delta$, the half-line $\alpha+\mathbb{R}_{+}\mathbf{e}_i$ is contained in $\Delta$, where $\mathbf{e}_i$ is the unit vector in the direction of the $\xi_i$-axis.
\end{enumerate}
\end{definition}

The set $I_\mathbf{w}$ does not depend on the choice of the weights $\mathbf{w}$. It is called the \emph{non-compact direction} of $\Delta$ and is denoted by $I_\Delta$.

\begin{example}\label{example1}
If $f(z_1,z_2,\bar{z}_1,\bar{z}_2)=z_1^2z_2^2\bar{z}_2+z_1\bar{z}_1^2z_2^2+z_1^6$, then the non-compact face $\Delta:=(2,3)+\mathbb{R}_+\mathbf{e}_2$ is essential.
Indeed, we can take $\mathbf{w}=(1,0)$. Thus $\Delta=\Delta_{\mathbf{w}}$, $I_\Delta=\{2\}$ and $f(0,0,z_2,\bar{z}_2)=0$ for any $z_2$. On the other hand, the non-compact face $\Xi:=(6,0)+\mathbb{R}_+\mathbf{e}_1$ is not essential. Indeed, the non-compact direction of $\Xi$ is $I_\Xi=\{1\}$, and the function $(z_1,0)\mapsto f(z_1,\bar{z}_1,0,0)$ does not identically vanishes. Therefore, $\Gamma_{nc}(f;\mathbf{z},\bar{\mathbf{z}})$ has three (compact) $0$-dimensional faces (the points $A=(2,3)$, $B=(3,2)$ and $C=(6,0)$), two compact $1$-dimensional faces (the segment $\overline{AB}$ and $\overline{BC}$), and one $1$-dimensional essential non-compact face (the vertical half-line $\Delta$). See Figure~\ref{fig1}, left-hand side.
\end{example}

\begin{figure}[t]
\includegraphics[scale=1.8]{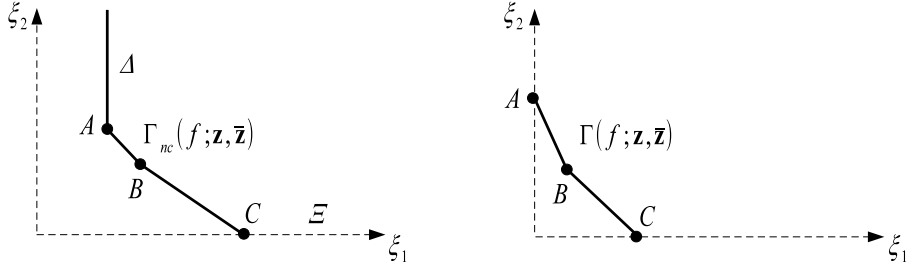}
\caption{Example \ref{example1} (left) and Example \ref{example2} (right)}
\label{fig1}
\end{figure}

\subsection{Strong non-degeneracy}
This notion plays a crucial role in the theory. 

\begin{definition}\label{def-nd}
The function $f$ is said to be \emph{strongly non-degenerate} if for any (compact) face $\Delta\subseteq \Gamma(f;\mathbf{z},\bar{\mathbf{z}})$, the face function
\begin{equation*}
f_\Delta(\mathbf{z},\bar{\mathbf{z}}):=\sum_{\nu+\mu\in \Delta} c_{\nu,\mu}\, \mathbf{z}^\nu \bar{\mathbf{z}}^\mu
\end{equation*}
(which is also a mixed polynomial function) has no critical 
point on $(\mathbb{C}^*)^n$.
\end{definition}

\begin{example}\label{example2}
If $f(z_1,z_2,\bar{z}_1,\bar{z}_2)=\bar{z}_2^4+z_1z_2^2+z_1^2\bar{z}_1$, then $\Gamma(f;\mathbf{z},\bar{\mathbf{z}})$ has three $0$-dimensional faces $A=(0,4)$ , $B=(1,2)$ and $C=(3,0)$, and two $1$-dimensional faces $\overline{AB}$ and $\overline{BC}$. See Figure \ref{fig1}, right-hand side. We check easily that the face functions $f_{A}=\bar{z}_2^4$, $f_{B}=z_1z_2^2$, $f_{C}=z_1^2\bar{z}_1$, $f_{\overline{AB}}=\bar{z}_2^4+z_1z_2^2$ and $f_{\overline{BC}}=z_1z_2^2+z_1^2\bar{z}_1$ have no critical point on $(\mathbb{C}^*)^2$, and therefore the function $f$ is strongly non-degenerate.
\end{example}

\begin{remark}\label{rk-ffpri}
Note that if $f=g+\sqrt{-1}\, h$ and $f_\Delta=g_\Delta+\sqrt{-1}\, h_\Delta$ are the decompositions of $f$ and $f_\Delta$ into their real and imaginary parts, then $g_\Delta$ (respectively, $h_\Delta$) is the face function defined by $g$ (respectively, by $h$) and $\Delta$.
\end{remark}

Let $\mathscr{I}_{nv}(f)$ (respectively, $\mathscr{I}_{v}(f)$) be the set of all subsets $I\subseteq \{1,\ldots,n\}$ such that $f_{\mid\mathbb{C}^I}\not\equiv 0$ 
(respectively, $f_{\mid\mathbb{C}^I}\equiv 0$). For any $I\in\mathscr{I}_{nv}(f)$ 
(respectively, any $I\in\mathscr{I}_{v}(f)$), the subspace $\mathbb{C}^I$ is called a \emph{non-vanishing} (respectively, a \emph{vanishing}) coordinates subspace for $f$.
The function $f$ is called \emph{convenient} if the singleton set $\{i\}$ belongs to $\mathscr{I}_{nv}(f)$ for any $1\leq i\leq n$. In other words, $f$ is convenient if and only if $\Gamma(f;\mathbf{z},\bar{\mathbf{z}})$ intersects with the $\xi_i$-axis of $\mathbb{R}^n$ for any $1\leq i\leq n$. 
If $f$ is a strongly non-degenerate mixed polynomial function, and if furthermore $f$ is convenient, then, in a small neighbourhood of the origin, the mixed hypersurface $V(f)$ is either mixed non-singular or has an isolated mixed singularity at $\mathbf{0}\in\mathbb{C}^n$. On the other hand, for non-convenient mixed polynomial functions, mixed singularities are not necessarily isolated. See \cite[\S 3]{O3}.

\subsection{Local tameness}
As we have seen in the previous section, the strong non-degeneracy controls the behaviour of the face functions corresponding to the compact faces of the Newton polygon. The role of the local tameness is to control the face functions corresponding to the essential non-compact faces.

\begin{notation}
For any $u_{i_1},\ldots, u_{i_m}\in\mathbb{C}^*$ ($m\leq n$), we set
\begin{equation*} 
\mathbb{C}^{*\{1,\ldots,n\}}_{u_{i_1},\ldots, u_{i_m}}:=
\bigl\{(z_1,\ldots,z_n)\in(\mathbb{C}^*)^n\mid z_{i_j}=u_{i_j} \mbox{ for } 1\leq j\leq m\bigr\}.
\end{equation*}
\end{notation}

\begin{definition}\label{definitionlocallytame}
Let $\Delta\subseteq\Gamma_{nc}(f;\mathbf{z},\bar{\mathbf{z}})$ be an essential non-compact face, and let $\mathbf{w}=(w_1,\ldots,w_n)$ be a system of weights satisfying the conditions (i), (ii) and (iii) of Definition \ref{def-mnbencf}. Suppose that $I_\Delta:=I_{\mathbf{w}}=\{i_1,\ldots,i_m\}$ (i.e., $w_i=0$ if and only if $i\in\{i_1,\ldots,i_m\}$). We say that the face function 
\begin{equation*}
f_\Delta(\mathbf{z},\bar{\mathbf{z}}):=
\sum_{\nu+\mu\in \Delta} c_{\nu,\mu}\, \mathbf{z}^\nu \bar{\mathbf{z}}^\mu
\end{equation*}
is \emph{locally tame} if there exists a positive number $r(f_\Delta)>0$ such that for any non-zero complex numbers $u_{i_1},\ldots, u_{i_m}\in\mathbb{C}^*$ with 
\begin{equation}\label{irlt}
\vert u_{i_1}\vert^2+\cdots +\vert u_{i_m}\vert^2 < r(f_\Delta)^2,
\end{equation} 
$f_\Delta$ has no critical point in $\mathbb{C}^{*\{1,\ldots,n\}}_{u_{i_1},\ldots, u_{i_m}}$ as a mixed polynomial function of the $n-m$ variables $z_{i_{m+1}},\ldots,z_{i_n}$. (Here, $\{i_{m+1},\ldots,i_n\}=\{1,\ldots, n\}\setminus \{i_{1},\ldots,i_m\}$.) 

We say that $f$ is \emph{locally tame along the vanishing coordinates subspace $\mathbb{C}^I$}, $I\in\mathscr{I}_v(f)$, if for any essential non-compact face $\Delta\subseteq\Gamma_{nc}(f;\mathbf{z},\bar{\mathbf{z}})$ with $I_{\Delta}=I$, the face function $f_\Delta$ is locally tame. Finally, we say that $f$ is \emph{locally tame along its vanishing coordinates subspaces} if it is locally tame along $\mathbb{C}^I$ for any $I\in\mathscr{I}_v(f)$.
\end{definition}

\begin{notation}\label{notationrayon}
For any given essential non-compact face $\Delta$, we denote by $\bar{r}(f_\Delta)$ the supremum of the numbers $r(f_\Delta)$ satisfying the inequality (\ref{irlt}).
Then, for any $I\in\mathscr{I}_v(f)$, we set
\begin{equation*}
r_I(f):=\mbox{min} \{\bar{r}(f_\Delta)\mid \Delta \mbox{ is an essential non-compact face with } I_\Delta=I\}.
\end{equation*} 
Finally, we put
\begin{equation*}
r_{nc}(f) := \mbox{min} \{ r_I(f) \mid I\in\mathscr{I}_v(f)\}.
\end{equation*}
The number $r_{nc}(f)$ is called the \emph{radius of local tameness} of $f$.
\end{notation}

\begin{example}
If $f(z_1,z_2,\bar{z}_1,\bar{z}_2)=\bar{z}_1^2z_2^3+z_1^3\bar{z}_2^2+2z_1^2z_2^4$, then $\Gamma_{nc}(f;\mathbf{z},\bar{\mathbf{z}})$ has two essential non-compact faces $\Delta_1:=A+\mathbb{R}_+\mathbf{e}_2$ and $\Delta_2:=B+\mathbb{R}_+\mathbf{e}_1$, where $A=(2,3)$ and $B=(3,2)$. Here, $I_{\Delta_1}=\{2\}$ and $I_{\Delta_2}=\{1\}$. For any $u_2\in\mathbb{C}^*$ with $\vert u_2\vert<1/2$, the mixed function 
\begin{equation*}
z_1\mapsto f_{\Delta_1}(z_1,u_2,\bar{z}_1,\bar{u}_2)=\bar{z}_1^2u_2^3+2z_1^2u_2^4
\end{equation*}
of the variable $z_1$ has no critical point on $\mathbb{C}^{*\{1,2\}}_{u_2}$. Thus the face function $f_{\Delta_1}$ is locally tame (we can take $r(f_{\Delta_1})=1/2$). Similarly, for any $u_1\in\mathbb{C}^*$, the mixed function 
\begin{equation*}
z_2\mapsto f_{\Delta_2}(u_1,z_2,\bar{u}_1,\bar{z}_2)=u_1^3\bar{z}_2^2
\end{equation*}
of the variable $z_2$ has no critical point on $\mathbb{C}^{*\{1,2\}}_{u_1}$, and hence the face function $f_{\Delta_2}$ is locally tame. Altogether, $f$ is locally tame along its vanishing coordinates subspaces.
\end{example}

\section{Admissible families and main theorem}\label{sect-ult}

Now consider a complex-valued polynomial function 
\begin{equation*}
f\colon \mathbb{C}\times \mathbb{C}^n\to \mathbb{C},\
(t,\mathbf{z})\mapsto f(t,\mathbf{z},\bar{\mathbf{z}}),
\end{equation*}
of the variables $t$, $\mathbf{z}:=(z_1,\ldots, z_n)$ and $\bar{\mathbf{z}}:=(\bar{z}_1,\ldots,\bar{z}_n)$ ($f$ does not depend on $\bar t$), and look at the $1$-parameter deformation family $\{f_t\}$ of mixed polynomial functions 
\begin{equation*}
f_t\colon \mathbb{C}^n\to \mathbb{C}, \ \mathbf{z}\mapsto f_t(\mathbf{z},\bar{\mathbf{z}}):=f(t,\mathbf{z},\bar{\mathbf{z}}).
\end{equation*}
As usual, we assume that for all $t$ sufficiently small, the origin of $\mathbb{C}^n$ belongs to the mixed hypersurface $V(f_t)$ and $f_t$ is not identically zero near $\mathbf{0}\in\mathbb{C}^n$.
In \S \ref{sub-admfam}, we introduce a condition (admissibility condition) that will ensure Whitney equisingularity for families of (possibly non-isolated) mixed singularities. Roughly, a family $\{f_t\}$ of mixed polynomial functions is \emph{admissible} if for all $t$ small enough, the non-compact Newton boundary $\Gamma_{nc}(f_t;\mathbf{z},\bar{\mathbf{z}})$ is independent of $t$, the mixed polynomial function $f_t$ is strongly non-degenerate and locally tame, and the radius of local tameness $r_{nc}(f_t)$ is greater than or equal to a fixed positive number $\rho>0$ (i.e., the family $\{f_t\}$ is \emph{uniformly} locally tame). This is a ``mixed'' version of the admissibility condition introduced in \cite{EO} in the special case of holomorphic polynomial functions.  Our main result (\S \ref{sub-admfam}, Theorem \ref{mt2}) says that if a family $\{f_t\}$ satisfies the admissibility condition, then the corresponding family of mixed hypersurfaces $\{V(f_t)\}$ is Whitney  equisingular and satisfies the Thom $a_f$ condition. In particular, $\{V(f_t)\}$ is topologically equisingular. See \S \ref{defwrthms} for the definitions. The proof will show that if, in addition to the above assumptions, the function $f_t$ is also convenient---in which case $f_t$ has an isolated mixed singularity at $\mathbf{0}\in\mathbb{C}^n$---then we can drop the uniform local tameness assumption.

Before going into further details, we first show two important preliminary propositions. The first one says that if the Newton boundary $\Gamma(f_t;\mathbf{z},\bar{\mathbf{z}})$ is independent of $t$ and $f_t$ is strongly non-degenerate for all small $t$, then, in a neighbourhood of the origin $\mathbf{0}\in\mathbb{C}^n$, the mixed hypersurface $V(f_t)$ is mixed non-singular along $\mathbb{C}^{*I}$ for any $I\in\mathscr{I}_{nv}(f_t)$ and any $t$ small enough (cf.~\S \ref{sub-unismooth}, Proposition \ref{lemmasmooth}). The second proposition asserts that if, in addition to the above assumptions, $f_t$ is locally tame along the vanishing coordinates subspaces, then, in a neighbourhood of the origin, the nearby fibres $f_t^{-1}(\eta)$ are mixed non-singular for any $t$ and any $\eta\not=0$ small enough (cf.~\S \ref{sub-unitrans}, Proposition \ref{lemma-transversality}).

\subsection{Mixed smoothness along the non-vanishing coordinates subspaces}\label{sub-unismooth}

The following proposition is both a ``uniform'' version of \cite[Theorem 19]{O3} and a ``mixed'' version of \cite[Proposition 3.1]{EO}.

\begin{proposition}\label{lemmasmooth}
Suppose that for all $t$ sufficiently small, the following two conditions are satisfied:
\begin{enumerate}
\item
the Newton boundary $\Gamma(f_t;\mathbf{z},\bar{\mathbf{z}})$ of $f_t$ at the origin is independent of $t$ (in particular, $\mathscr{I}_{nv}(f_t)$ is independent of $t$);
\item
the mixed polynomial function $f_t$ is strongly non-degenerate. 
\end{enumerate}
Then there exists a positive number $R>0$ such that for any $I\in\mathscr{I}_{nv}(f_0)$ and any $t$ sufficiently small, $V(f_t)\cap \mathbb{C}^{*I}\cap B_R$ is mixed non-singular and intersects transversely with $S_r$ for any $r<R$, where $B_R$ (respectively, $S_r$) is the open ball (respectively, the sphere) with centre the origin $\mathbf{0}\in\mathbb{C}^n$ and radius $R$ (respectively, $r$).
\end{proposition}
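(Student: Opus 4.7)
The plan is to argue by contradiction via the real analytic curve selection lemma, following the scheme of \cite[Theorem~19]{O3} (a single mixed function) and \cite[Proposition~3.1]{EO} (a holomorphic family), now blended to treat both a parameter $t$ \emph{and} a mixed structure. Since hypothesis~(1) forces $\mathscr{I}_{nv}(f_t)=\mathscr{I}_{nv}(f_0)$, one can fix $I\in\mathscr{I}_{nv}(f_0)$ and argue uniformly in this $I$, then take the minimum of the radii obtained over the (finitely many) choices of $I$.

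Assume the conclusion fails. Then for arbitrarily small $t$ one finds a point $\mathbf{z}\in V(f_t)\cap\mathbb{C}^{*I}$, arbitrarily close to the origin, at which either $\mathbf{z}$ is a mixed critical point of $f_t\vert_{\mathbb{C}^{*I}}$, or $V(f_t)\cap\mathbb{C}^{*I}$ meets $S_{\vert\mathbf{z}\vert}$ non-transversely. Using the characterization of mixed critical points recalled in \S\ref{sect-ncnblt} (existence of $\lambda$ with $\vert\lambda\vert=1$ and $\overline{\partial f_t}=\lambda\,\bar\partial f_t$), together with the analogous Milnor-type condition for sphere non-transversality, the ``bad'' locus is a semi-algebraic subset of $\mathbb{R}^2\times\mathbb{R}^{2n}\times S^1$ (parameters $(t,\mathbf{z}_{\mathbb{R}},\lambda)$) whose closure contains the origin. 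The real analytic curve selection lemma supplies an arc $s\mapsto(t(s),\mathbf{z}(s),\lambda(s))$, $s\in[0,\varepsilon)$, shrinking to the origin as $s\to 0$, with $\mathbf{z}(s)\in\mathbb{C}^{*I}$ for $s>0$, along which $f_{t(s)}(\mathbf{z}(s),\bar{\mathbf{z}}(s))\equiv 0$ and the critical-point (or non-transversality) relation holds identically.

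Next, write the Puiseux expansions $z_i(s)=a_i s^{p_i}+o(s^{p_i})$ with $a_i\in\mathbb{C}^*$ and $p_i\in\mathbb{Q}_{>0}$ for $i\in I$, and let $\Delta$ be the compact face of the Newton polygon $\Gamma(f_0\vert_{\mathbb{C}^I};\mathbf{z},\bar{\mathbf{z}})$ where the weighted-degree functional with weights $(p_i)_{i\in I}$ attains its minimum. Assumption~(1), together with an extension of the weights that assigns very large values on the indices outside $I$, allows one to realize $\Delta$ as a face of $\Gamma(f_t;\mathbf{z},\bar{\mathbf{z}})$ for all small $t$, and the coefficients of the monomials of $f_t$ supported on $\Delta$ vary continuously with $t$; hence $(f_{t(s)}\vert_{\mathbb{C}^I})_\Delta\to (f_0\vert_{\mathbb{C}^I})_\Delta$ as $s\to 0$. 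Comparing leading terms in $s$ in the relations $f_{t(s)}(\mathbf{z}(s),\bar{\mathbf{z}}(s))\equiv 0$ and $\overline{\partial f_{t(s)}}=\lambda(s)\,\bar\partial f_{t(s)}$ along $\mathbf{z}(s)$, and passing to the limit $\lambda_0:=\lim_{s\to 0}\lambda(s)\in S^1$, one obtains
\begin{equation*}
\overline{\partial(f_0\vert_{\mathbb{C}^I})_\Delta}(\mathbf{a},\bar{\mathbf{a}})=\lambda_0\,\bar\partial(f_0\vert_{\mathbb{C}^I})_\Delta(\mathbf{a},\bar{\mathbf{a}})
\end{equation*}
at $\mathbf{a}:=(a_i)_{i\in I}\in(\mathbb{C}^*)^I$ (the sphere-transversality branch produces an additional term proportional to $\bar{\mathbf{a}}$, which can be absorbed using $(f_0\vert_{\mathbb{C}^I})_\Delta(\mathbf{a},\bar{\mathbf{a}})=0$ together with the weighted-homogeneous Euler identity satisfied by the face function). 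Thus $\mathbf{a}$ is a mixed critical point of $(f_0\vert_{\mathbb{C}^I})_\Delta$ on $(\mathbb{C}^*)^I$, contradicting the strong non-degeneracy of $f_0$, which restricts to that of $f_0\vert_{\mathbb{C}^I}$ since the extended-weight face function of $f_0$ equals $(f_0\vert_{\mathbb{C}^I})_\Delta$.

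The main obstacle is handling the two alternatives (failure of mixed non-singularity and failure of sphere transversality) in a unified curve-selection setup while simultaneously carrying the parameter $t$, and then checking that the leading-order extraction in $s$ is compatible with the mixed (non-holomorphic) dependence on $\mathbf{z}$ so as to yield a genuine identity for the face function of $f_0$. The constancy of the Newton boundary plays the decisive role here: it guarantees that the monomial support on $\Delta$ is preserved as $t\to 0$ and that the face function depends continuously on $t$, which is what makes the contradiction with strong non-degeneracy \emph{uniform} in $t$.
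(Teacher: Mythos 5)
Your blueprint for the first (mixed-smoothness) half is essentially the paper's: curve selection off a bad semi-algebraic locus, Taylor expansion of the arc, extraction of leading terms to land on a compact face $\Delta_{\mathbf w}$ of $\Gamma(f_0\vert_{\mathbb C^I})$, and a contradiction with the fact (\cite[Proposition~7]{O3}) that strong non-degeneracy of $f_0$ descends to strong non-degeneracy of $f_0\vert_{\mathbb C^I}$. (One small slip: the curve selection lemma gives real analytic arcs, so the expansions are Taylor expansions with positive \emph{integer} exponents, not general Puiseux ones.)

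The sphere-transversality half, as you have set it up, does not quite close. You keep a single unimodular $\lambda(s)$ as the only extra parameter and claim the non-transversality condition ``produces an additional term proportional to $\bar{\mathbf a}$ which can be absorbed'' so as to again exhibit $\mathbf a$ as a mixed critical point of the face function. That is not what happens. Non-transversality of $V(f_t\vert_{\mathbb C^I})$ to $S_{\|\mathbf z\|}$ at $\mathbf z$ is characterized (Lemma \ref{lemmaOkaChen}, i.e.\ Oka's Lemma~2 and Chen's Lemma~3.2.1) by the existence of \emph{two real} scalars $\alpha,\beta$ with
\begin{equation*}
\mathbf z=\alpha\,\bar\partial g_{t,\bar t}\vert_{\mathbb C^I}(\mathbf z,\bar{\mathbf z})+\beta\,\bar\partial h_{t,\bar t}\vert_{\mathbb C^I}(\mathbf z,\bar{\mathbf z}),
\end{equation*}
not by a condition with a single $\lambda\in S^1$; and the extra term on the left is $\mathbf z(s)$, hence proportional to $\mathbf a$, not $\bar{\mathbf a}$. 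After curve selection and comparison of leading orders in $s$, this forces a genuine case split on whether $\omega+d_{\mathbf w}-w_i<w_i$ for all $i$ or equality occurs for some index. In the first sub-case one does recover a critical point of the face function. In the second sub-case one does \emph{not} obtain a critical point at all; instead one substitutes the resulting identity into the radial Euler identity for the radially weighted homogeneous function $\alpha_0(g_{0}\vert_{\mathbb C^I})_{\Delta_{\mathbf w}}+\beta_0(h_0\vert_{\mathbb C^I})_{\Delta_{\mathbf w}}$, which (using $f_{t(s)}(\mathbf z(s),\bar{\mathbf z}(s))\equiv 0$) collapses to the purely numerical contradiction $0=2w_1\sum_{1\le i\le k}|a_i|^2>0$. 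Your ``absorption'' step glosses over exactly this sub-case and, as written, would not produce a contradiction there; the Euler identity is used to finish directly, not to upgrade the relation back into a face-function criticality statement.
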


\begin{proof}
As there are only finitely many subsets $I\in\mathscr{I}_{nv}(f_0)$, it suffices to show that for a fixed $I\in\mathscr{I}_{nv}(f_0)$, there is $R>0$ such that for any $t$ small enough, $V(f_t)\cap \mathbb{C}^{*I}\cap B_R$ is mixed non-singular and intersects transversely with the sphere $S_r$ for any $r\leq R$. To simplify, we assume that $I=\{1,\ldots,m\}$. (The argument in the other cases is similar.)

We start with the ``mixed smoothness'' assertion. We argue by contradiction. Suppose that there exists a sequence $\{(t_N,\mathbf{z}_N)\}$ of points in $V(f)\cap(\mathbb{C}\times\mathbb{C}^{*I})$ converging to $(0,\mathbf{0})$ and such that $\mathbf{z}_N$ is a critical point of the restriction of the mixed polynomial function $f_{t_N}$ to $\mathbb{C}^{I}$. 
Then $(0,\mathbf{0})$ is in the closure of the algebraic set $W$ consisting of the points $(t,\mathbf{z})\in \mathbb{C}\times\mathbb{C}^{*I}$ satisfying the following two conditions: 
\begin{enumerate}
\item
${f_t}_{\mid \mathbb{C}^{I}}(\mathbf{z},\bar{\mathbf{z}})=0$;
\item
there exists $\lambda_{t,\mathbf{z}}\in\mathbb{S}^1$ such that $\overline{\partial {f_t}_{\mid \mathbb{C}^{I}}}(\mathbf{z},\bar{\mathbf{z}})=\lambda_{t,\mathbf{z}}\, 
\overline{\partial} {f_t}_{\mid \mathbb{C}^{I}}(\mathbf{z},\bar{\mathbf{z}})$.
\end{enumerate}
Therefore, by the curve selection lemma (cf.~\cite{Milnor}), there is a real analytic curve 
\begin{align*}
(t(s),\mathbf{z}(s))=(t(s),z_1(s),\ldots,z_m(s),0,\ldots,0) 
\end{align*}
and a Laurent series $\lambda(s)\in\mathbb{S}^1$ such that:
\begin{enumerate}
\item[(i)]
$(t(0),\mathbf{z}(0))=(0,\mathbf{0})$;
\item[(ii)]
$(t(s),\mathbf{z}(s))\in \mathbb{C}\times \mathbb{C}^{*I}$ for $s\not=0$;
\item [(iii)]
$f_{t(s)}(\mathbf{z}(s),\bar{\mathbf{z}}(s))=0$;
\item [(iv)]
$\overline{\partial {f_{t(s)}}_{\mid \mathbb{C}^{I}}}(\mathbf{z}(s),\bar{\mathbf{z}}(s))=\lambda(s)\, \overline{\partial} {f_{t(s)}}_{\mid \mathbb{C}^{I}}(\mathbf{z}(s),\bar{\mathbf{z}}(s))$ for $s\not=0$;
\end{enumerate}
where $\bar{\mathbf{z}}(s):=(\bar{z}_1(s),\ldots,\bar{z}_n(s))$ and $\bar{z}_i(s)$ is the complex conjugate of $z_i(s)$.
Consider the Taylor expansions 
\begin{align*}
t(s)=t_0s^{v}+\cdots
\quad\mbox{and}\quad 
z_{i}(s)=a_{i}\, s^{w_{i}}+\cdots \ (1\leq i\leq m),
\end{align*}
where $t_0,\, a_{i}\not=0$ and $v,\, w_{i}>0$, and the Laurent expansion
\begin{align*}
\lambda(s)=\lambda_0+\lambda_1 s+\cdots,
\end{align*}
where $\lambda_0\in\mathbb{S}^1\subseteq\mathbb{C}$.
Throughout the dots stand for the higher order terms.
Let $\mathbf{a}:=(a_1,\ldots,a_m,0,\ldots,0)\in\mathbb{C}^{*I}$ and $\mathbf{w}:=(w_1,\ldots,w_m,0,\ldots,0)\in\mathbb{N}^{*I}$, and let $\Delta_\mathbf{w}$ be the  (compact) face of $\Gamma\bigl({f_{t(s)}}_{\mid \mathbb{C}^{I}};\mathbf{z},\bar{\mathbf{z}}\bigr)=\Gamma\bigl({f_0}_{\mid \mathbb{C}^{I}};\mathbf{z},\bar{\mathbf{z}}\bigr)$  defined by the locus
where the map
\begin{align*}
\xi:=(\xi_1,\ldots, \xi_m,0,\ldots,0)\in \Gamma({f_{t(s)}}_{\mid \mathbb{C}^{I}};\mathbf{z},\bar{\mathbf{z}}) \mapsto \sum_{1\leq i\leq m} \xi_i w_i
\end{align*}
takes its minimal value $d_\mathbf{w}$. 
For any $1\leq i\leq m$,
\begin{equation}\label{lemmasmooth-eq1}
\begin{aligned}
\frac{\partial \bigl({f_{t(s)}}_{\mid \mathbb{C}^{I}}\bigr)}
{\partial z_{i}}(\mathbf{z}(s),\bar{\mathbf{z}}(s)) = 
\frac{\partial \bigl({f_{t(s)}}_{\mid \mathbb{C}^{I}}\bigr)_{\Delta_\mathbf{w}}}
{\partial z_{i}}(\mathbf{a},\bar{\mathbf{a}})\, s^{d_\mathbf{w}-w_{i}}+\cdots,\\
\frac{\partial \bigl({f_{t(s)}}_{\mid \mathbb{C}^{I}}\bigr)}
{\partial \bar{z}_{i}}(\mathbf{z}(s),\bar{\mathbf{z}}(s)) = 
\frac{\partial \bigl({f_{t(s)}}_{\mid \mathbb{C}^{I}}\bigr)_{\Delta_\mathbf{w}}}
{\partial \bar{z}_{i}}(\mathbf{a},\bar{\mathbf{a}})\, s^{d_\mathbf{w}-w_{i}}+\cdots,
\end{aligned}
\end{equation}
where $\bigl({f_{t(s)}}_{\mid \mathbb{C}^{I}}\bigr)_{\Delta_\mathbf{w}}$ is the face function associated with ${f_{t(s)}}_{\mid \mathbb{C}^{I}}$ and $\Delta_\mathbf{w}$. The relation (iv)  says that for any $1\leq i\leq m$ and any $s\not=0$,
\begin{align*}
\frac{\overline{\partial \bigl({f_{t(s)}}_{\mid \mathbb{C}^{I}}\bigr)_{\Delta_\mathbf{w}}}} {\partial z_{i}}(\mathbf{a},\bar{\mathbf{a}})\, s^{d_\mathbf{w}-w_i} 
+ \cdots = \lambda_0 \frac{\partial \bigl({f_{t(s)}}_{\mid \mathbb{C}^{I}}\bigr)_{\Delta_\mathbf{w}}} {\partial \bar{z}_{i}}(\mathbf{a},\bar{\mathbf{a}})\, s^{d_\mathbf{w}-w_i} + \cdots .
\end{align*}
Dividing by $s^{d_{\mathbf{w}}-w_i}$ and taking the limit as $s\to 0$ gives
\begin{align*}
\overline{\partial \bigl({f_{0}}_{\mid \mathbb{C}^{I}}\bigr)_{\Delta_\mathbf{w}}}(\mathbf{a},\bar{\mathbf{a}})=\lambda_0\, \overline{\partial} \bigl({f_{0}}_{\mid \mathbb{C}^{I}}\bigr)_{\Delta_\mathbf{w}}(\mathbf{a},\bar{\mathbf{a}}),
\end{align*}
that is, $\mathbf{a}\in\mathbb{C}^{*I}$ is a critical point of $\bigl({f_{0}}_{\mid \mathbb{C}^{I}}\bigr)_{\Delta_\mathbf{w}}\colon \mathbb{C}^{I}\to\mathbb{C}$. In particular, this implies that the mixed polynomial function ${f_{0}}_{\mid \mathbb{C}^{I}}$ is \emph{not} strongly non-degenerate as a mixed function of the variables $z_1,\ldots, z_m$. This contradicts Proposition 7 of \cite{O3} which says that if a mixed polynomial function ${f_{0}}(\mathbf{z},\bar{\mathbf{z}})$ is strongly non-degenerate and if ${f_{0}}_{\mid \mathbb{C}^{I}}\not\equiv 0$, then ${f_{0}}_{\mid \mathbb{C}^{I}}$ is strongly non-degenerate as a mixed function of the variables $z_i$, $i\in I$. 

To prove the ``transversality'' assertion, we use the following lemma.

\begin{lemma}[cf.~{\cite[Lemma 2]{O1} and \cite[Lemma 3.2.1]{C}}]\label{lemmaOkaChen}
Let  $k(\mathbf{z},\bar{\mathbf{z}})$ be a mixed polynomial function, let $\mathbf{p}$ be a mixed non-singular point of $V(k):=k^{-1}(0)$, and let $r(\mathbf{z},\bar{\mathbf{z}})$ be a real-valued mixed function. Then the following three assertions are equivalent:
\begin{enumerate}
\item
The restriction of $r$ to $V(k)$ has a critical point at $\mathbf{p}$;
\item
There exists a complex number $\lambda\in\mathbb{C}^*$ such that
\begin{align*}
\qquad
\bar{\partial} r(\mathbf{p},\bar{\mathbf{p}})=\lambda \overline{\partial k}(\mathbf{p},\bar{\mathbf{p}}) + \bar{\lambda} \bar{\partial}k(\mathbf{p},\bar{\mathbf{p}});
\end{align*}
\item
There exist real numbers $\alpha$ and $\beta$ such that
\begin{align*}
\qquad
\bar{\partial} r(\mathbf{p},\bar{\mathbf{p}})=\alpha\bar{\partial} k_1(\mathbf{p},\bar{\mathbf{p}}) + \beta \bar{\partial}k_2(\mathbf{p},\bar{\mathbf{p}}),
\end{align*}
where $k_1$ and $k_2$ are the real and imaginary parts of $k$ respectively.
\end{enumerate}
\end{lemma}

Using this lemma, we can prove the ``transversality'' assertion of Proposition \ref{lemmasmooth} as follows. Again, we argue by contradiction. Suppose that there exists a sequence $\{(t_N,\mathbf{z}_N)\}$ of points in $V(f)\cap(\mathbb{C}\times \mathbb{C}^{*I})$ converging to $(0,\mathbf{0})$ and such that $V(f_{t_N})\cap \mathbb{C}^{*I}$ does not intersect the sphere $S_{\Vert z_N\Vert}$ transversely at $\mathbf{z}_N$. Then, by Lemma \ref{lemmaOkaChen} applied with the squared distance function $r(\mathbf{z},\bar{\mathbf{z}}):=\Vert \mathbf{z}\Vert^2=\sum_{i=1}^n (x_i^2+y_i^2)$, the origin $(0,\mathbf{0})$ of $\mathbb{C}\times\mathbb{C}^n$ is in the closure of the set consisting of the points $(t,\mathbf{z})\in \mathbb{C}\times \mathbb{C}^{*I}$ satisfying the following two conditions:
\begin{enumerate}
\item
${f_t}_{\mid \mathbb{C}^{I}}(\mathbf{z},\bar{\mathbf{z}})=0$;
\item
there exists real numbers 
$\alpha_{t,\mathbf{z}}$ and $\beta_{t,\mathbf{z}}$ such that
\begin{align*}
\qquad
\mathbf{z}=\alpha_{t,\mathbf{z}}\, \bar{\partial} {g_{t,\bar t}}_{\mid \mathbb{C}^I}(\mathbf{z},\bar{\mathbf{z}}) + \beta_{t,\mathbf{z}}\, \bar{\partial}{h_{t,\bar t}}_{\mid \mathbb{C}^I}(\mathbf{z},\bar{\mathbf{z}}),
\end{align*}
where $g_{t,\bar t}:=\Re(f_t)$ and $h_{t,\bar t}:=\Im(f_t)$ are the real and imaginary parts of $f_t$ respectively. (Note that, unlike the function $f$ which does not depend on $\bar t$, the functions $g$ and $h$ do depend on it.)
\end{enumerate}
Thus, by the curve selection lemma, we can find a real analytic curve 
\begin{align*}
(t(s),\mathbf{z}(s))=(t(s),z_1(s),\ldots,z_m(s),0,\ldots,0) 
\end{align*}
and Laurent series $\alpha(s)$ and $\beta(s)$ such that:
\begin{enumerate}
\item[(i)]
$(t(0),\mathbf{z}(0))=(0,\mathbf{0})$;
\item[(ii)]
$(t(s),\mathbf{z}(s))\in \mathbb{C}\times \mathbb{C}^{*I}$ for $s\not=0$;
\item [(iii)]
$f_{t(s)}(\mathbf{z}(s),\bar{\mathbf{z}}(s))=0$;
\item [(iv)]
$\mathbf{z}(s)=\alpha(s)\, \bar{\partial} {g_{t(s),\bar t(s)}}_{\mid \mathbb{C}^I}(\mathbf{z}(s),\bar{\mathbf{z}}(s)) + \beta(s)\, \bar{\partial}{h_{t(s),\bar t(s)}}_{\mid \mathbb{C}^I}(\mathbf{z}(s),\bar{\mathbf{z}}(s))$ for $s\not=0$.
\end{enumerate}
Consider the Taylor expansions 
\begin{align*}
t(s)=t_0s^{v}+\cdots
\quad\mbox{and}\quad
z_{i}(s)=a_{i}\, s^{w_{i}}+\cdots \ (1\leq i\leq m),
\end{align*}
where $t_0,\, a_{i}\not=0$ and $v,\, w_{i}>0$, 
and the Laurent expansions
\begin{align*}
\alpha(s)=\alpha_0\, s^\omega+\cdots
\quad\mbox{and}\quad
\beta(s)=\beta_0\, s^{\omega'}+\cdots,
\end{align*}
where $\alpha_0,\, \beta_0\not=0$.
Then define $\mathbf{a}$, $\mathbf{w}$, $d_{\mathbf{w}}$ and $\Delta_{\mathbf{w}}$ as above. 
The condition (iv) says that for any $1\leq i\leq m$ and any $s\not=0$,
\begin{equation}\label{cond-de}
\begin{aligned}
a_{i}\, s^{w_{i}}+\cdots =
 \biggl(\alpha_0\, & \frac{\partial \bigl({g_{t(s),\bar t(s)}}_{\mid \mathbb{C}^I}\bigr)_{\Delta_{\mathbf{w}}}}{\partial \bar{z}_i} 
 (\mathbf{a},\bar{\mathbf{a}}) \, s^{\omega+d_{\mathbf{w}}-w_i} +\cdots\biggr) + \\
& \biggl(\beta_0\, \frac{\partial \bigl({h_{t(s),\bar t(s)}}_{\mid \mathbb{C}^I}\bigr)_{\Delta_{\mathbf{w}}}}{\partial \bar{z}_i}(\mathbf{a},\bar{\mathbf{a}})\, s^{\omega'+d_{\mathbf{w}}-w_i} +\cdots\biggr). 
\end{aligned}
\end{equation}
By reordering, we may assume that $w_1=\cdots=w_k<w_j$ ($k<j\leq m$).
To simplify, let us explain the argument in the case where $\omega'=\omega$. (The argument in the other cases is similar.)
If $\omega+d_{\mathbf{w}}-w_i<w_i$ for all $1\leq i\leq m$, then 
\begin{align*}
\alpha_0\, \frac{\partial \bigl({g_{0}}_{\mid \mathbb{C}^{I}}\bigr)_{\Delta_{\mathbf{w}}}} {\partial \bar{z}_{i}}(\mathbf{a},\bar{\mathbf{a}}) + \beta_0\, \frac{\partial \bigl({h_{0}}_{\mid \mathbb{C}^{I}}\bigr)_{\Delta_{\mathbf{w}}}} {\partial \bar{z}_{i}}(\mathbf{a},\bar{\mathbf{a}}) = 0
\end{align*}
for all $1\leq i\leq m$, where $g_0:=g_{0,\bar 0}=\Re(f_0)$ and $h_0:=h_{0,\bar 0}=\Im(f_0)$.
Therefore $\mathbf{a}\in\mathbb{C}^{*I}$ is a critical point of $({f_{0}}_{\mid \mathbb{C}^{I}})_{\Delta_{\mathbf{w}}}$, which is a contradiction with the strong non-degeneracy of ${f_{0}}_{\mid \mathbb{C}^{I}}$. Now, if there exists $i_0$, $1\leq i_0\leq m$, such that $\omega+d_{\mathbf{w}}-w_{i_0}=w_{i_0}$, then (\ref{cond-de}) shows that $1\leq i_0\leq k$, $\omega+d_{\mathbf{w}}-w_1=w_1$, and
\begin{align}\label{exprder}
\alpha_0\, \frac{\partial \bigl({g_{0}}_{\mid \mathbb{C}^{I}}\bigr)_{\Delta_{\mathbf{w}}}} {\partial \bar{z}_{i}}(\mathbf{a},\bar{\mathbf{a}}) + \beta_0\, \frac{\partial \bigl({h_{0}}_{\mid \mathbb{C}^{I}}\bigr)_{\Delta_{\mathbf{w}}}} {\partial \bar{z}_{i}}(\mathbf{a},\bar{\mathbf{a}}) =
\left\{
\begin{aligned}
& a_i &&\mbox{for} && 1\leq i\leq k,\\
& 0 &&\mbox{for} && k< i\leq m.
\end{aligned}
\right.
\end{align}
Moreover, as $f(t(s),\mathbf{z}(s),\bar{\mathbf{z}}(s))=0$ for any $s$, we have
\begin{align*}
\bigl({g_{0}}_{\mid \mathbb{C}^{I}}\bigr)_{\Delta_{\mathbf{w}}}(\mathbf{a},\bar{\mathbf{a}}) = \bigl({h_{0}}_{\mid \mathbb{C}^{I}}\bigr)_{\Delta_{\mathbf{w}}}
(\mathbf{a},\bar{\mathbf{a}})=0.
\end{align*}
As face functions with positive weights are always \emph{radially weighted homogeneous} (cf.~\cite[\S 2]{O3}), the mixed polynomial function 
\begin{equation*}
\ell(\mathbf{z},\bar{\mathbf{z}}):=\alpha_0 ({g_{0}}_{\mid \mathbb{C}^{I}}\bigr)_{\Delta_{\mathbf{w}}}(\mathbf{z},\bar{\mathbf{z}}) + \beta_0 ({h_{0}}_{\mid \mathbb{C}^{I}}\bigr)_{\Delta_{\mathbf{w}}}(\mathbf{z},\bar{\mathbf{z}})
\end{equation*}
is radially weighted homogeneous of type $(w_1,\ldots,w_m;d_{\mathbf{w}})$. Therefore we have the following \emph{radial Euler identity}:
\begin{align*}
d_{\mathbf{w}}\cdot \ell(\mathbf{z},\bar{\mathbf{z}}) = \sum_{1\leq i\leq m} w_i\biggl( z_i \frac{\partial \ell}{\partial z_i} (\mathbf{z},\bar{\mathbf{z}}) + \bar{z}_i \frac{\partial \ell}{\partial \bar{z}_i} (\mathbf{z},\bar{\mathbf{z}}) \biggr)
\end{align*}
for any $\mathbf{z}\in\mathbb{C}^n$.
Applying this identity with $\mathbf{z}=\mathbf{a}$ gives
\begin{align*}
0 & = d_{\mathbf{w}}\cdot \ell(\mathbf{a},\bar{\mathbf{a}}) = \sum_{1\leq i\leq m} w_i\biggl( a_i \frac{\partial \ell}{\partial z_i} (\mathbf{a},\bar{\mathbf{a}}) + \bar{a}_i \frac{\partial \ell}{\partial \bar{z}_i} (\mathbf{a},\bar{\mathbf{a}}) \biggr) \\
& = 2\Re\biggl( \sum_{1\leq i\leq m} w_i \bar{a}_i \frac{\partial \ell}{\partial \bar{z}_i} (\mathbf{a},\bar{\mathbf{a}}) \biggr)
\overset{\mbox{\tiny by (\ref{exprder})}}{=} 
2 w_1 \sum_{1\leq i\leq k}  \bar{a}_i a_i
  = 2w_1 \sum_{1\leq i\leq k} \vert a_i\vert^2 > 0,
\end{align*}
which is a contradiction.
This completes the proof of Proposition \ref{lemmasmooth}.
\end{proof}

\begin{remark}
Actually, the proof shows that we do not need to assume that $f_t$ is strongly non-degenerate for all $t$. It is enough to make this assumption for $t=0$.
\end{remark}

\subsection{Mixed smoothness of the nearby fibres}\label{sub-unitrans}

The following proposition generalizes \cite[Lemmas 4 and 7]{O1} and \cite[Lemma 28]{O3} to $1$-parameter deformation families. Though this proposition is not necessary for the purpose of our paper, it is interesting in itself and may be useful for other purposes. For this reason, we think it is worth to include it here.

\begin{proposition}\label{lemma-transversality}
Suppose that for all $t$ sufficiently small, the following two conditions are satisfied:
\begin{enumerate}
\item
the Newton boundary $\Gamma(f_t;\mathbf{z},\bar{\mathbf{z}})$ of $f_t$ at the origin is independent of $t$;
\item
the mixed polynomial function $f_t$ is strongly non-degenerate and locally tame along the vanishing coordinates subspaces. 
\end{enumerate}
Then there exists a positive number $R'>0$ such that for any $0<R''\leq R'$, there exists $\delta(R'')>0$ such that  for any $\eta\not=0$ with $\vert\eta\vert\leq\delta(R'')$ and any $r$ with $R''\leq r\leq R'$, the set $f_t^{-1}(\eta)\cap B_{R'}$ is mixed non-singular and transversely intersects the sphere $S_{r}$ for all $t$ small enough. 
\end{proposition}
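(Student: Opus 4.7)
The plan is to follow the scheme of Proposition \ref{lemmasmooth}: argue by contradiction, invoke the curve selection lemma, and analyze the support of the resulting real analytic arc. Two assertions must be verified---mixed non-singularity of $f_t^{-1}(\eta)\cap B_{R'}$ and transversality of this fibre with the spheres $S_r$---and in each we split into a ``non-vanishing support'' case, handled by the strong non-degeneracy of $f_0$, and a ``vanishing support'' case, handled immediately by Newton-boundary constancy.

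For the mixed non-singularity, suppose no such $R'$ exists. Then one can extract sequences $t_N\to 0$, $\mathbf{z}_N\to\mathbf{0}$, and $\eta_N:=f_{t_N}(\mathbf{z}_N)\neq 0$ with $\mathbf{z}_N$ a critical point of $f_{t_N}$. The semi-algebraic set
\begin{equation*}
W:=\bigl\{(t,\mathbf{z})\in\mathbb{C}\times\mathbb{C}^n\mid f_t(\mathbf{z})\neq 0,\ \overline{\partial f_t}(\mathbf{z},\bar{\mathbf{z}})=\lambda\,\bar{\partial}f_t(\mathbf{z},\bar{\mathbf{z}}) \text{ for some }\lambda\in\mathbb{S}^1\bigr\}
\end{equation*}
has $(0,\mathbf{0})$ in its closure, so by the curve selection lemma there is a real analytic arc $(t(s),\mathbf{z}(s))$ through $(0,\mathbf{0})$, contained in $W$ for $s>0$ small, together with an associated Laurent unit $\lambda(s)\in\mathbb{S}^1$. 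Let $I:=\{i\mid z_i(s)\not\equiv 0\}$ and write $z_i(s)=a_i s^{w_i}+\cdots$ with $a_i\neq 0$ and $w_i>0$ for $i\in I$. Since $\Gamma(f_t;\mathbf{z},\bar{\mathbf{z}})$ is independent of $t$, so are $\Gamma_+(f_t)$ and $\mathscr{I}_v(f_t)=\mathscr{I}_v(f_0)$. If $I\in\mathscr{I}_v(f_0)$, then $f_t|_{\mathbb{C}^I}\equiv 0$ for every small $t$, whence $f_{t(s)}(\mathbf{z}(s))\equiv 0$ along the arc, contradicting $(t(s),\mathbf{z}(s))\in W$. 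If instead $I\in\mathscr{I}_{nv}(f_0)$, let $\Delta:=\Delta_{\mathbf{w}|_I}$ be the compact face of $\Gamma(f_0|_{\mathbb{C}^I})$ cut out by $(w_i)_{i\in I}$; substituting the Taylor expansion of $\mathbf{z}(s)$ into the critical-point identity exactly as in \eqref{lemmasmooth-eq1}, dividing by $s^{d_{\mathbf{w}|_I}-w_i}$, and letting $s\to 0$, one finds that the vector $(a_i)_{i\in I}\in\mathbb{C}^{*I}$ is a critical point of the compact face function $(f_0|_{\mathbb{C}^I})_\Delta$, contradicting the strong non-degeneracy of $f_0$ via \cite[Proposition~7]{O3}.

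For the transversality assertion, a failure at a point $\mathbf{z}_N\in f_{t_N}^{-1}(\eta_N)\cap S_{r_N}$ with $r_N\in[R'',R']$ and $\eta_N\neq 0$ yields, via Lemma \ref{lemmaOkaChen} applied with $r(\mathbf{z},\bar{\mathbf{z}})=\Vert\mathbf{z}\Vert^2$, real numbers $\alpha_N,\beta_N$ such that $\mathbf{z}_N=\alpha_N\,\bar{\partial}g_{t_N}(\mathbf{z}_N,\bar{\mathbf{z}}_N)+\beta_N\,\bar{\partial}h_{t_N}(\mathbf{z}_N,\bar{\mathbf{z}}_N)$. Curve selection again produces an arc $(t(s),\mathbf{z}(s))$ and Laurent series $\alpha(s),\beta(s)$ realizing this identity pointwise, with $f_{t(s)}(\mathbf{z}(s))\neq 0$. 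The case $I\in\mathscr{I}_v(f_0)$ is immediate from $f_{t(s)}(\mathbf{z}(s))\equiv 0$. In the case $I\in\mathscr{I}_{nv}(f_0)$, the face functions $(g_0|_{\mathbb{C}^I})_\Delta$ and $(h_0|_{\mathbb{C}^I})_\Delta$ are radially weighted homogeneous of positive type $((w_i)_{i\in I};d_{\mathbf{w}|_I})$; applying the radial Euler identity to $\ell:=\alpha_0(g_0|_{\mathbb{C}^I})_\Delta+\beta_0(h_0|_{\mathbb{C}^I})_\Delta$ at $(a_i)_{i\in I}$ reproduces the positive-sum contradiction of the form $0=2w_1\sum_{w_i=w_1}|a_i|^2>0$ that concludes the transversality portion of Proposition \ref{lemmasmooth}. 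The lower bound $r\geq R''$ keeps the arc at a scale where this leading-order analysis is valid, while $\delta(R'')$ bounds $|\eta|$ so that $f_t^{-1}(\eta)\cap S_r$ is nonempty only in the intended regime.

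The principal subtlety, absent from Proposition \ref{lemmasmooth}, is that along the arc the value $\eta(s):=f_{t(s)}(\mathbf{z}(s))$ is nonzero (though $\eta(s)\to 0$ as $s\to 0$), and one must check that its presence does not disturb the leading-order identities derived from $\lambda(s)$, $\alpha(s)$, and $\beta(s)$. The saving observation is that $\eta(s)$ and each of its partial derivatives $\partial_{z_i}\eta(s)$, $\partial_{\bar z_i}\eta(s)$ have radial weighted degree at least $d_{\mathbf{w}|_I}>0$ with respect to $\mathbf{w}$; after dividing by $s^{d_{\mathbf{w}|_I}-w_i}$ and letting $s\to 0$, the $\eta$-contribution disappears and we recover a clean face-function identity on $\Delta\subseteq\Gamma(f_0|_{\mathbb{C}^I})$. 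The uniform local tameness hypothesis, while not invoked in the immediate $\mathscr{I}_v$ contradictions, plays its natural role within the admissibility framework by ensuring that the radii $R'$ and $\delta(R'')$ can be chosen uniformly in the parameter $t$.
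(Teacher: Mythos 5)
Your argument for the mixed non-singularity part is sound and matches the paper's approach, but the transversality part has a genuine gap stemming from a misidentification of the relevant index set. You set $I:=\{i\mid z_i(s)\not\equiv 0\}$ (the support of the arc, which the paper calls $K$) and claim $w_i>0$ for all $i\in I$. This is only true if the arc converges to the origin. But for the transversality assertion, the failure points $\mathbf{z}_N$ lie on spheres $S_{r_N}$ with $r_N\geq R''>0$, so the arc produced by the curve selection lemma has $\mathbf{z}(0)\neq\mathbf{0}$; hence at least one $z_i(s)$ in the support has $w_i=0$. The set that actually matters is the paper's $I:=\{i\in K\mid w_i=0\}$, which is nonempty here, and the face $\Delta_{\mathbf{w}}$ defined by the weights is then a \emph{non-compact} face. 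Your radial Euler contradiction $0=2w_1\sum|a_i|^2>0$ therefore fails: $w_1=\min_i w_i=0$, so the right-hand side is zero and there is no contradiction.

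This is exactly the point where local tameness enters, and you never invoke it substantively --- the closing paragraph relegates it to a vague role about uniformity of radii, whereas the paper explicitly states that the transversality part \emph{requires} it. The correct argument splits on the paper's $I$ (not on $K$): when $I$ is nonempty and $I\in\mathscr{I}_v(f_0)$, the comparison of leading orders shows that $\mathbf{a}$ is a critical point of $(f_0)_{\Delta_{\mathbf{w}}}$ either as a full mixed function or as a mixed function of $z_i$, $i\notin I$, with $z_i=a_i$ fixed for $i\in I$, contradicting local tameness; when $I=\emptyset$ the Euler-identity argument applies (but this case does not actually occur for the sphere-transversality statement); and when $I\in\mathscr{I}_{nv}(f_0)$, one appeals to Proposition~\ref{lemmasmooth} (the point $\mathbf{a}_I\in V(f_0)\cap\mathbb{C}^{*I}$ is transverse to its sphere) plus stability of transversality, a third sub-case your proposal does not address at all. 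You correctly identified the $\eta(s)\neq 0$ subtlety, but the deeper one --- the nonzero limit point forcing a non-compact face and hence local tameness --- is the crux, and it is missing.
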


Again, $B_{R'}$ (respectively, $S_r$) is the open ball (respectively, the sphere) with centre the origin $\mathbf{0}\in\mathbb{C}^n$ and radius $R'$ (respectively, $r$).

\begin{proof}
We shall omit the proof of the mixed smoothness of $f_t^{-1}(\eta)\cap B_{R'}$, which can be shown by an argument similar to that used in the proof of  Proposition \ref{lemmasmooth}. 
In particular, the only requirements for this part of the proposition is the independence of the Newton boundary with respect to the parameter $t$ and the strong non-degeneracy of $f_t$.

The transversality to the spheres is more subtle, as it requires (in addition to the above conditions) the local tameness of $f_t$. For this reason, we shall give the full details for this part. Again, we argue by contradiction. If the assertion is false, then,
by the curve selection lemma and by Lemma \ref{lemmaOkaChen}, we can find a real analytic curve $(t(s),\mathbf{z}(s))$
and a Laurent series $\lambda(s)$ such that:
\begin{enumerate}
\item[(i)]
$f_{t(s)}(\mathbf{z}(s),\bar{\mathbf{z}}(s))\not=0$ for $s\not=0$ and $f_0(\mathbf{z}(0),\bar{\mathbf{z}}(0))=0$;
\item [(ii)]
$\mathbf{z}(s)=\lambda(s)\, \overline{{\partial} {f_{t(s)}}}   
(\mathbf{z}(s),\bar{\mathbf{z}}(s)) + \bar{\lambda}(s)\, 
\bar{\partial}{f_{t(s)}} (\mathbf{z}(s),\bar{\mathbf{z}}(s))$ for $s\not=0$;
\end{enumerate}
where $\bar{\lambda}(s)$ is the complex conjugate of $\lambda(s)$.
Consider the Laurent expansion
\begin{align*}
\lambda(s)=\lambda_0\, s^\omega+\cdots,
\end{align*}
where $\lambda_0\not=0$, and the Taylor expansions 
\begin{align*}
t(s)=t_0s^{v}+\cdots
\quad\mbox{and}\quad
z_{i}(s)=a_{i}\, s^{w_{i}}+\cdots,
\end{align*}
where $t_0\not=0$, $v>0$, and $a_{i}\not=0$ if the function $z_i(s)$ is not identically zero.
Let $K:=\{i\in\{1,\ldots,n\}\mid z_i(s)\not\equiv 0\}$. To simplify, we assume that $K=\{1,\ldots,n\}$. (The argument is exactly the same in the other cases.)
Then we define $\mathbf{a}:=(a_1,\ldots,a_n)$, $\mathbf{w}:=(w_1,\ldots,w_n)$, and we consider the face $\Delta_\mathbf{w}$ of the Newton polygon $\Gamma_+\bigl({f_{t(s)}};\mathbf{z},\bar{\mathbf{z}}\bigr)=\Gamma_+\bigl({f_0};\mathbf{z},\bar{\mathbf{z}}\bigr)$  defined by the locus where the map
\begin{align*}
\xi:=(\xi_1,\ldots, \xi_n)\in \Gamma_+({f_{t(s)}};\mathbf{z},\bar{\mathbf{z}}) \mapsto \sum_{1\leq i\leq n} \xi_i w_i
\end{align*}
takes its minimal value $d_\mathbf{w}$. Finally, we set $I:=\{i\in K\mid w_i=0\}$. 
 The condition (ii) says that for any $i\in K$ and any $s\not=0$,
\begin{align*}
a_{i}\, s^{w_{i}}+\cdots =
 \biggl(\lambda_0\, \overline{\frac{\partial \bigl({f_{t(s)}}\bigr)_{\Delta_\mathbf{w}}}{\partial z_i}} & (\mathbf{a},\bar{\mathbf{a}}) \, s^{\omega+d_\mathbf{w}-w_i} +\cdots\biggr) + \\
& \biggl(\bar{\lambda}_0\, \frac{\partial \bigl({f_{t(s)}}\bigr)_{\Delta_\mathbf{w}}}{\partial \bar{z}_i}(\mathbf{a},\bar{\mathbf{a}})\, s^{\omega+d_\mathbf{w}-w_i} +\cdots\biggr).
\end{align*}
Let us first assume that $I$ is not empty and belongs to $\mathscr{I}_v(f_0)=\mathscr{I}_v(f_t)$.
The lowest power $p_i$ of the left-hand side of the above equation is $w_i$ (in particular, if $i\in I$, then $p_i=0$) while the lowest (possibly negative) power $q_i$ of the right-hand side is greater than or equal to $\omega+d_\mathbf{w}-w_i$ (if $i\in I$, then $q_i\geq\omega+d_\mathbf{w}$). Clearly, $\omega+d_\mathbf{w}\leq 0$. (Otherwise, comparing both sides for $i\in I\not=\emptyset$ gives $0<\omega+d_\mathbf{w}\leq q_i=p_i=0$, which is a contradiction.) 
If $\omega+d_\mathbf{w}<0$, then for all $1\leq i\leq n$,
\begin{align*}
0=\lambda_0\, \overline{\frac{\partial \bigl({f_{0}}\bigr)_{\Delta_\mathbf{w}}}{\partial z_i}}(\mathbf{a},\bar{\mathbf{a}}) + \bar{\lambda}_0\, \frac{\partial \bigl({f_{0}}\bigr)_{\Delta_\mathbf{w}}}{\partial \bar{z}_i}(\mathbf{a},\bar{\mathbf{a}}),
\end{align*}
while if $\omega+d_\mathbf{w}=0$, then 
\begin{align*}
\lambda_0\, \overline{\frac{\partial \bigl({f_{0}}\bigr)_{\Delta_\mathbf{w}}}{\partial z_i}}(\mathbf{a},\bar{\mathbf{a}}) + \bar{\lambda}_0\, \frac{\partial \bigl({f_{0}}\bigr)_{\Delta_\mathbf{w}}}{\partial \bar{z}_i}(\mathbf{a},\bar{\mathbf{a}}) =
\left\{
\begin{aligned}
& a_i &&\mbox{for} && i\in I,\\
& 0 &&\mbox{for} && i\notin I.
\end{aligned}
\right.
\end{align*}
The above equations show that if $\omega+d_\mathbf{w}<0$, then the point $\mathbf{a}$ is a critical point of $({f_{0}})_{\Delta_\mathbf{w}}$, while if $\omega+d_\mathbf{w}=0$, then $\mathbf{a}$ is a critical point of $({f_{0}})_{\Delta_\mathbf{w}}$ as a mixed function of the variables $z_i$ with $i\notin I$, fixing $z_i=a_i$ for $i\in I$. In both cases, we get a contradiction with the local tameness of $({f_{0}})_{\Delta_\mathbf{w}}$. (Note that by considering the function $s\mapsto z_i(s/b_i)$ for a sufficiently big $b_i$, if necessary, we may always assume that $\sum\vert a_i\vert^2\leq \mbox{min}\{r_{nc}(f_0),R\}$, where $r_{nc}(f_0)$ is the radius of local tameness of $f_0$ and $R$ is the positive number which appears in Proposition \ref{lemmasmooth}.) 

Now, if $I$ is empty, then an argument similar to that given in Proposition \ref{lemmasmooth} leads to a contradiction with the strong non-degeneracy of $f_0$ and the radial Euler identity. (In particular, in this case, the local tameness assumption for $f_0$ can be dropped.) Details are straightforward and left to~the~reader.

Finally, if $I\in\mathscr{I}_{nv}(f_0)$, then we get a contradiction with Proposition \ref{lemmasmooth}. Indeed, let $\mathbf{a}_I:=(a_{I,1},\ldots, a_{I,n})$, where $a_{I,i}$ is equal to $a_i$ if $i\in I$ and to zero otherwise. Then $\mathbf{a}_I$ belongs to the set $V(f_0)\cap\mathbb{C}^{*I}$, and by Proposition \ref{lemmasmooth}, this set is transverse at $\mathbf{a}_I$ to the sphere $S_{\Vert \mathbf{a}_I\Vert}$ provided that $\Vert \mathbf{a}_I\Vert<R$. (The latter condition is always possible by considering $s\mapsto z_i(s/b_i)$ for $b_i$ large enough as above.) By stability of transversality, it follows that 
\begin{equation*}
V(f_{t(s)}-f_{t(s)}(\mathbf{z}(s))):=f_{t(s)}^{-1}(f_{t(s)}(\mathbf{z}(s)))
\end{equation*}
is transverse to the sphere $S_{\Vert \mathbf{z}(s)\Vert}$ at the point $\mathbf{z}(s)$ provided that $s$ is sufficiently small. This completes the proof of the proposition.
\end{proof}

\begin{remark}
Again, the proof shows that we do not need to assume that $f_t$ is strongly non-degenerate and locally tame for all $t$. Making this assumption for $t=0$ is enough.
\end{remark}

\begin{remark}\label{rkfisosing}
In the special case where, in addition to the assumptions of Proposition \ref{lemma-transversality}, the functions $f_t$ are \emph{convenient}, the only vanishing coordinates subspace is~$\mathbb{C}^{\emptyset}$. Thus, as observe in the above proof, in this case, the conclusions of the proposition still hold true without the local tameness assumption.
\end{remark}

\subsection{Whitney equisingularity and Thom condition}\label{defwrthms}

A (real) stratification of a subset $E$ of $\mathbb{R}^N$ (i.e., a partition of $E$ into real smooth submanifolds of $\mathbb{R}^N$) is called \emph{Whitney $(b)$-regular}  if for any pair of strata $(S_1,S_2)$ and any point $\mathbf{p}\in S_1\cap\bar S_2$ (where $\bar S_2$ is the closure of $S_2$ in $\mathbb{R}^N$), the stratum $S_2$ is Whitney $(b)$-regular over the stratum $S_1$ at the point~$\mathbf{p}$. The latter condition means that  
for any sequences of points $\{\mathbf{p}_k\}$ in $S_{1}$, $\{\mathbf{q}_k\}$ in $S_{2}$ and $\{a_k\}$ in $\mathbb{C}$ satisfying:
\begin{enumerate}
\item[(i)]
$\mathbf{p}_k\to\mathbf{p}$ and $\mathbf{q}_k\to\mathbf{p}$;
\item[(ii)]
$T_{\mathbf{q}_k} S_{2}\to T$;
\item[(iii)]
$a_k(\mathbf{p}_k-\mathbf{q}_k)\to v$;
\end{enumerate}
we have $v\in T$. (As usual, $T_{\mathbf{q}_k} S_{2}$ is the tangent space to $S_{2}$ at $\mathbf{q}_k$.) For details, we refer the reader to \cite{GWPL}.

\begin{remark}\label{remarkstratification}
Note that if $M$ is a real smooth manifold and $N\subseteq M$ is a real closed smooth submanifold, then $M\setminus N$ is Whitney $(b)$-regular over $N$ at any point.
\end{remark}

The following proposition, stated in \cite{O2}, says that in order to show that the Whitney $(b)$-regularity holds, it suffices to check it along real analytic paths. This is a consequence of the curve selection lemma (cf.~\cite{Milnor}) and Theorem 17.5 of \cite{Whitney}.

\begin{proposition}[cf.~{\cite[Proposition (2.2)]{O2}}]\label{prop-suitearc1}
If $\{\mathbf{p}_k\}$, $\{\mathbf{q}_k\}$ and $\{a_k\}$ are sequences satisfying the above conditions (i), (ii) and (iii), then there exist real analytic paths $\mathbf{p}(s)$, $\mathbf{q}(s)$ and $a(s)$ such that:
\begin{enumerate}
\item[(a)]
$\mathbf{p}(s)\in S_1$ and $a(s)\in\mathbb{C}$ for all $s$, and $\mathbf{q}(s)\in S_2$ for all $s\not=0$;
\item[(b)]
$\mathbf{p}(0)=\mathbf{q}(0)=\mathbf{p}$;
\item[(c)]
$T_{\mathbf{q}(s)} S_{2}\to T$;
\item[(d)]
$a(s)(\mathbf{p}(s)-\mathbf{q}(s))\to v$.
\end{enumerate}
\end{proposition}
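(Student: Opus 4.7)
The strategy is to encode conditions (i), (ii), (iii) as a single closure condition on a single semi-algebraic (or subanalytic) set, and then extract the required real analytic paths by applying Milnor's curve selection lemma.

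Realize the real Grassmannian $\mathrm{Gr}_d(\mathbb{R}^N)$, with $d := \dim S_2$, as a real algebraic variety (for instance, as the set of rank-$d$ orthogonal projection matrices in $\mathbb{R}^{N \times N}$). The Gauss map $\tau\colon S_2 \to \mathrm{Gr}_d(\mathbb{R}^N)$, $\mathbf{y} \mapsto T_\mathbf{y} S_2$, is then real analytic, and in the setting of this paper it is semi-algebraic, since the strata arising from Whitney stratifications of real algebraic sets are semi-algebraic.

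Next, introduce the semi-algebraic set
$$E := \{(\mathbf{x}, \mathbf{y}, L, w) \in S_1 \times S_2 \times \mathrm{Gr}_d(\mathbb{R}^N) \times \mathbb{R}^N \mid L = T_\mathbf{y} S_2,\ \mathbf{x} \neq \mathbf{y},\ w = a(\mathbf{x} - \mathbf{y})\ \text{for some}\ a \in \mathbb{C}\},$$
where the scalar multiplication uses the ambient complex structure $\mathbb{R}^N \cong \mathbb{C}^{N/2}$. The given sequence $(\mathbf{p}_k, \mathbf{q}_k, T_{\mathbf{q}_k} S_2, a_k(\mathbf{p}_k - \mathbf{q}_k))$ lies in $E$ and converges to $\mathbf{z}_0 := (\mathbf{p}, \mathbf{p}, T, v)$. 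Since $\mathbf{p} = \mathbf{p}$ violates the constraint $\mathbf{x} \neq \mathbf{y}$, we have $\mathbf{z}_0 \in \overline{E} \setminus E$. Milnor's curve selection lemma (cf.~\cite{Milnor}) then yields a real analytic arc $\gamma\colon [0, \epsilon) \to \overline{E}$ with $\gamma(0) = \mathbf{z}_0$ and $\gamma(s) \in E$ for $s \neq 0$. Writing $\gamma(s) = (\mathbf{p}(s), \mathbf{q}(s), L(s), w(s))$ gives (a) and (b) immediately; condition (c) follows from $L(s) = T_{\mathbf{q}(s)} S_2$ for $s \neq 0$ together with $L(0) = T$. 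For (d), note that for each $s \neq 0$ there is a unique $a(s) \in \mathbb{C}$ with $w(s) = a(s)(\mathbf{p}(s) - \mathbf{q}(s))$; solving for it via a matched Puiseux-coordinate quotient expresses $a(s)$ as a Laurent series in $s$, and then $a(s)(\mathbf{p}(s) - \mathbf{q}(s)) = w(s) \to v$ as $s \to 0$.

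The main delicate point is the possible unboundedness of the scalars $a_k$, which prevents a direct use of the curve selection lemma on any compact set containing the tuple $(\mathbf{p}_k, \mathbf{q}_k, T_{\mathbf{q}_k} S_2, a_k)$. Recording instead the bounded product $w = a(\mathbf{x} - \mathbf{y})$ sidesteps this, since the vectors $w_k := a_k(\mathbf{p}_k - \mathbf{q}_k)$ are convergent; the scalar $a(s)$ is then recovered after the fact as a possibly meromorphic Laurent series, which suffices for (d).
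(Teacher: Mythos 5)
Your approach is correct and is essentially the standard reconstruction of this kind of ``curve selection for Whitney data'' lemma; since the paper cites Oka's Proposition (2.2) and offers only the one-line pointer that the result follows from Milnor's curve selection lemma together with Theorem 17.5 of Whitney, a comparison can only be made against that pointer. Your encoding of the data $(\mathbf{p}_k,\mathbf{q}_k,T_{\mathbf{q}_k}S_2,a_k(\mathbf{p}_k-\mathbf{q}_k))$ in an auxiliary set $E\subseteq S_1\times S_2\times\mathrm{Gr}_d(\mathbb{R}^N)\times\mathbb{R}^N$ containing the graph of the Gauss map, followed by curve selection at the boundary point $(\mathbf{p},\mathbf{p},T,v)\in\overline{E}\setminus E$, is exactly the right idea, and recording the bounded product $w=a(\mathbf{x}-\mathbf{y})$ rather than the (potentially unbounded) scalar $a$ is the correct way to make the closure argument work. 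The one point of divergence from the paper's pointer is that you sidestep Whitney's Theorem 17.5 entirely: in the analytic category one needs a result of that kind to know that the graph of the Gauss map, and hence $E$, lies in the semi-analytic category where curve selection applies, whereas in the present semi-algebraic setting this is automatic from Tarski--Seidenberg. Since the strata $A_I$, $B_I$, $C_I$ used in the paper are manifestly semi-algebraic, your simplification is valid for the paper's purposes, though it is worth flagging that it narrows the generality of the statement slightly compared with Oka's original analytic version.

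Two small remarks. First, for condition (a) you should say explicitly that $\mathbf{p}(s)\in S_1$ at $s=0$ because $\gamma(0)=(\mathbf{p},\mathbf{p},T,v)$ has first coordinate $\mathbf{p}\in S_1$; curve selection only guarantees $\gamma(s)\in E$ for $s\neq 0$, and $S_1$ need not be closed. Second, your observation that $a(s)$ is in general only a Laurent series with a pole at $s=0$ (never an analytic path through $0$ when $v\neq 0$, since $\|\mathbf{p}(s)-\mathbf{q}(s)\|\to 0$) is correct and unavoidable; the literal phrase ``$a(s)\in\mathbb{C}$ for all $s$'' in the statement must be read as saying the scalar is complex-valued rather than that it extends analytically through $s=0$, and indeed the paper's own application in the proof of Theorem~\ref{mt2} implicitly takes $a(s)=1/\|\ell(s)\|$, which blows up at $s=0$. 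So this is a sloppiness in the proposition as stated, not a defect of your argument, and you handle it appropriately by extracting $a(s)$ componentwise from $w(s)=a(s)(\mathbf{p}(s)-\mathbf{q}(s))$ after the fact.
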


Let $u:=\Re(t)$ and $v:=\Im(t)$ be the real and imaginary parts of $t$ respectively. Then the mixed hypersurface 
\begin{equation*}
V(f):=\{(t,\mathbf{z}) \in\mathbb{C}\times\mathbb{C}^n \mid f(t,\mathbf{z},\bar{\mathbf{z}})=0\}
\end{equation*}
may be understood as the real algebraic variety in $\mathbb{R}^2\times\mathbb{R}^{2n}$ defined by 
\begin{equation*}
g((u,v),\mathbf{x},\mathbf{y})=h((u,v),\mathbf{x},\mathbf{y})=0,
\end{equation*}
where $g:=\Re(f)$ and $h:=\Im(f)$ are the real and imaginary parts of $f$ respectively.

\begin{definition}
We say that the family of mixed hypersurfaces $\{V(f_t)\}$ is \emph{Whitney equisingular} if the exists a real Whitney $(b)$-regular stratification $\mathscr{S}$ of the mixed hypersurface $V(f)$ (understood as a real algebraic variety in $\mathbb{R}^2\times\mathbb{R}^{2n}$) such that the $t$-axis $\mathbb{C}\times \{\mathbf{0}\}$ (identified to $\mathbb{R}^2\times \{\mathbf{0}\}\subseteq\mathbb{R}^2\times\mathbb{R}^{2n}$) is a stratum.
\end{definition}

\begin{remark}
If $(V(f),\mathscr{S})$ is a real Whitney $(b)$-regular stratified set, then, by the Thom-Mather first isotopy theorem, $(V(f),\mathscr{S})$ is topologically locally trivial (see, e.g., Theorem (5.2) and Corollary (5.5) of \cite{GWPL}). Therefore, if, furthermore, the $t$-axis is a stratum (i.e., if the family $\{V(f_t)\}$ is Whitney equisingular), then $\{V(f_t)\}$ is topologically equisingular. The latter condition means that for all sufficiently small $t$, there exists an open neighbourhood $U_t$ of $\mathbf{0}\in\mathbb{C}^n$ together with a homeomorphism $\varphi_t\colon (U_t,\mathbf{0})\rightarrow (\varphi_t(U_t),\mathbf{0})$ such that $\varphi_t(V(f_0)\cap U_t)=V(f_t)\cap \varphi_t(U_t)$. In other words, the local ambient topological type of $V(f_t)$ at $\mathbf{0}\in\mathbb{C}
^n$ is independent of $t$ for all small $t$.
\end{remark}

Let $\Sigma f$ be the set of critical points of the mixed polynomial function $f$. Assume there exists a real Whitney $(a)$-regular\footnote{A (real) stratification of a subset $E\subseteq \mathbb{R}^N$ is called \emph{Whitney $(a)$-regular} if for any pair of strata $(S_1,S_2)$ and any point $\mathbf{p}\in S_1\cap\bar S_2$, the stratum $S_2$ is Whitney $(a)$-regular over the stratum $S_1$ at the point $\mathbf{p}$. That is, for any sequence of points $\{\mathbf{q}_k\}$ in $S_{2}$ satisfying 
\begin{equation*}
\mathbf{q}_k\to\mathbf{p}
\quad\mbox{and}\quad
T_{\mathbf{q}_k} S_{2}\to T, 
\end{equation*}
we have $T_{\mathbf{p}}S_1\subseteq T$. (Note that a Whitney $(b)$-regular stratification is always $(a)$-regular while the converse is not true.)} stratification $\mathscr{S}$ of $\mathbb{C}\times \mathbb{C}^n$ (identified with $\mathbb{R}^{2}\times \mathbb{R}^{2n}$) such that the mixed hypersurface $V(f)$ is a union of strata. We say that $\mathscr{S}$ satisfies \emph{Thom's $a_f$ condition} with respect to the complement of $\Sigma f$ if for any stratum $S\in\mathscr{S}$ contained in $V(f)$ and any point  $(\tau,\mathbf{q})\in S$, there exists an open neighbourhood $W$ of $(\tau,\mathbf{q})$ in $\mathbb{C}\times\mathbb{C}^n$ such that for any sequence of points $(\tau_k,\mathbf{q}_k)\in W\setminus\Sigma f$ satisfying
\begin{equation}\label{condtafc}
(\tau_k,\mathbf{q}_k)\to (\tau,\mathbf{q})
\quad\mbox{and}\quad
T_{(\tau_k,\mathbf{q}_k)}V(f-f(\tau_k,\mathbf{q}_k,\bar{\mathbf{q}}_k))\to T,
\end{equation}
we have $T_{(\tau,\mathbf{q})}S\subseteq T$, where
\begin{equation*}
V(f-f(\tau_k,\mathbf{q}_k,\bar{\mathbf{q}}_k)):=\{(t,\mathbf{z})\in \mathbb{C}\times\mathbb{C}^n \mid f(t,\mathbf{z},\bar{\mathbf{z}})=f(\tau_k,\mathbf{q}_k,\bar{\mathbf{q}}_k)\}.
\end{equation*}

Just as for the Whitney $(b)$-regularity, to show that the Thom condition holds, it suffices to check it along real analytic paths. (Again, this is a consequence of the curve selection lemma.) Precisely, we have the following statement.

\begin{proposition}\label{prop-suitearc2}
If $\{(\tau_k,\mathbf{q}_k)\}$ is a sequence of points in $W\setminus\Sigma f$ satisfying the above condition (\ref{condtafc}), then there exists a real analytic path $(\tau(s),\mathbf{q}(s))$ such that:
\begin{enumerate}
\item[(a)]
$(\tau(s),\mathbf{q}(s))\in W\setminus\Sigma f$ for all $s\not=0$;
\item[(b)]
$(\tau(0),\mathbf{q}(0))=(\tau,\mathbf{q})$;
\item[(c)]
$T_{(\tau(s),\mathbf{q}(s))}V(f-f(\tau(s),\mathbf{q}(s),\bar{\mathbf{q}}(s)))\to T$.
\end{enumerate}
\end{proposition}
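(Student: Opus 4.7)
The plan is to reduce the statement to a direct application of the real-analytic curve selection lemma, by encoding the tangent-space data as a point of a semi-algebraic set. Identify $\mathbb{C}\times\mathbb{C}^n$ with $\mathbb{R}^{2+2n}$ and write $g:=\Re(f)$, $h:=\Im(f)$. Since adding the constant $-f(\tau_k,\mathbf{q}_k,\bar{\mathbf{q}}_k)$ to $f$ does not change differentials, the tangent space appearing in~(\ref{condtafc}) is exactly the real orthogonal complement in $\mathbb{R}^{2+2n}$ of the two gradient vectors $dg(\tau_k,\mathbf{q}_k)$ and $dh(\tau_k,\mathbf{q}_k)$. Off $\Sigma f$ these two vectors are $\mathbb{R}$-linearly independent, so this orthogonal complement is always a $2n$-dimensional real subspace, i.e.\ a point of the real Grassmannian $G:=\mathrm{Gr}_{2n}(\mathbb{R}^{2+2n})$.

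Embed $G$ as a compact real algebraic subset of the space $\mathrm{Sym}_{2+2n}(\mathbb{R})$ of symmetric matrices via the map that sends a subspace $\Pi$ to its orthogonal projector $P_\Pi$. Then the incidence set
\[
X:=\bigl\{\,((t',\mathbf{z}'),P)\in (W\setminus\Sigma f)\times G\ :\ P=P_{T_{(t',\mathbf{z}')}V(f-f(t',\mathbf{z}',\bar{\mathbf{z}}'))}\,\bigr\}
\]
is semi-algebraic, because away from $\Sigma f$ the projector on the kernel of the $2\times(2+2n)$ real matrix $A(t',\mathbf{z}'):=\binom{dg(t',\mathbf{z}')}{dh(t',\mathbf{z}')}$ is given by the rational formula $I-A^{\top}(AA^{\top})^{-1}A$ in the real coordinates of $(t',\mathbf{z}')$. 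By the hypothesis, the points $((\tau_k,\mathbf{q}_k),P_{T_k})$, where $T_k:=T_{(\tau_k,\mathbf{q}_k)}V(f-f(\tau_k,\mathbf{q}_k,\bar{\mathbf{q}}_k))$, lie in $X$ and converge to $((\tau,\mathbf{q}),P_T)$, so the latter point lies in the closure $\overline{X}$ inside $\mathbb{R}^{2+2n}\times\mathrm{Sym}_{2+2n}(\mathbb{R})$.

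Applying the real-analytic curve selection lemma (cf.~\cite{Milnor}) to the semi-algebraic set $X$ at the boundary point $((\tau,\mathbf{q}),P_T)$ yields a real analytic path $s\mapsto((\tau(s),\mathbf{q}(s)),P(s))$ defined for $s$ in a small interval $[0,\varepsilon)$, taking the value $((\tau,\mathbf{q}),P_T)$ at $s=0$ and lying in $X$ for $s\neq 0$. Projecting onto the first factor produces a real analytic path $(\tau(s),\mathbf{q}(s))$ which lies in $W\setminus\Sigma f$ for $s\neq 0$ and which passes through $(\tau,\mathbf{q})$ at $s=0$; the second factor $P(s)$ is then, by membership in $X$, the projector onto $T_{(\tau(s),\mathbf{q}(s))}V(f-f(\tau(s),\mathbf{q}(s),\bar{\mathbf{q}}(s)))$ for $s\neq 0$, and converges to $P_T$ as $s\to 0$. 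This gives the three conditions (a), (b), (c).

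The main obstacle, which is really the only point requiring care, is to verify that the Gauss-type assignment $(t',\mathbf{z}')\mapsto T_{(t',\mathbf{z}')}V(f-f(t',\mathbf{z}',\bar{\mathbf{z}}'))$ extends to a bona fide semi-algebraic incidence relation, so that the curve selection lemma applies. The explicit projector formula $P=I-A^{\top}(AA^{\top})^{-1}A$ (valid wherever $A$ has rank $2$, i.e.\ precisely on $W\setminus\Sigma f$) makes this transparent, and the remainder of the argument is a routine compactness-and-curve-selection packaging, entirely parallel to the justification of Proposition~\ref{prop-suitearc1}.
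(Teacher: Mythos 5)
Your proof is correct, and it is essentially the argument the paper alludes to: the authors leave Proposition \ref{prop-suitearc2} with only the remark that it is ``a consequence of the curve selection lemma'' (in parallel with Proposition \ref{prop-suitearc1}), and your write-up supplies exactly the expected details. Encoding the tangent-space data as a point of $\mathrm{Gr}_{2n}(\mathbb{R}^{2+2n})$ via the orthogonal-projector embedding, observing that the rational formula $P=I-A^{\top}(AA^{\top})^{-1}A$ makes the incidence relation semi-algebraic away from $\Sigma f$ (where $\det(AA^{\top})>0$), and then applying the curve selection lemma at the limit point of the sequence is the standard and correct way to make the claim precise.
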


\subsection{Admissibility condition and the main theorem}\label{sub-admfam}
Throughout this section, unless otherwise stated, we assume that for all $t$ sufficiently small the following two conditions hold:
\begin{enumerate}
\item[($\mbox{C}_1$)]
the non-compact Newton boundary $\Gamma_{nc}(f_t;\mathbf{z},\bar{\mathbf{z}})$ of $f_t$ at the origin is independent of $t$ (in particular, $\mathscr{I}_{nv}(f_t)$ and $\mathscr{I}_{v}(f_t)$ are independent of $t$);
\item[($\mbox{C}_2$)]
the mixed polynomial function $f_t$ is strongly non-degenerate and locally tame along the vanishing coordinates subspaces (i.e., locally tame along $\mathbb{C}^I$ for any $I\in\mathscr{I}_{v}(f_t)=\mathscr{I}_{v}(f_0)$); we denote by $r_{nc}(f_t)$ the radius of local tameness of $f_t$ (cf.~Definition \ref{definitionlocallytame} and Notation \ref{notationrayon}).
\end{enumerate}

\begin{remark}
Note that $\Gamma_{nc}(f_t;\mathbf{z},\bar{\mathbf{z}})=\Gamma_{nc}(f_0;\mathbf{z},\bar{\mathbf{z}})$ $\Leftrightarrow$ $\Gamma(f_t;\mathbf{z},\bar{\mathbf{z}})=\Gamma(f_0;\mathbf{z},\bar{\mathbf{z}})$ $\Leftrightarrow$ $\Gamma_{+}(f_t;\mathbf{z},\bar{\mathbf{z}})=\Gamma_{+}(f_0;\mathbf{z},\bar{\mathbf{z}})$.
\end{remark}

By Proposition \ref{lemmasmooth}, we know that there exists a positive number $R>0$ such that for any $I\in\mathscr{I}_{nv}(f_t)=\mathscr{I}_{nv}(f_0)$ and any $t$ small enough, $V(f_t)\cap \mathbb{C}^{*I}\cap B_R$ is mixed non-singular---and therefore it is a smooth real algebraic variety of real codimension two.
It follows immediately that in a sufficiently small open neighbourhood $U$ of the origin of $\mathbb{C}\times\mathbb{C}^n$, the set
$V(f)\cap (\mathbb{C}\times \mathbb{C}^{*I})$ is mixed non-singular for any $I\in\mathscr{I}_{nv}(f_t)$. Therefore, in such a neighbourhood, we can stratify $\mathbb{C}\times \mathbb{C}^n$ (identified with $\mathbb{R}^2\times\mathbb{R}^{2n}$) in such a way that the mixed hypersurface 
\begin{equation*}
V(f):=\{(t,\mathbf{z})\in\mathbb{C}\times\mathbb{C}^n\mid f(t,\mathbf{z},\bar{\mathbf{z}})=0\}
\end{equation*}
(understood as a real algebraic variety in $\mathbb{R}^2\times\mathbb{R}^{2n}$) is a union of strata. More precisely, we consider the following three types of strata:
\begin{enumerate}
\item[$\cdot$]
$A_I:=U\cap (V(f)\cap(\mathbb{C}\times \mathbb{C}^{*I}))$ 
for $I\in\mathscr{I}_{nv}(f_0)$;
\item[$\cdot$]
$B_I:=U\cap((\mathbb{C}\times \mathbb{C}^{*I})\setminus A_I)$ for $I\in\mathscr{I}_{nv}(f_0)$;
\item[$\cdot$]
$C_I:=U\cap(\mathbb{C}\times \mathbb{C}^{*I})$ 
for $I\in\mathscr{I}_v(f_0)$.
\end{enumerate} 
The (finite) collection 
\begin{equation*}
\mathscr{S}:=\{A_I,B_I\mid I\in\mathscr{I}_{nv}(f_0)\}\cup \{C_I\mid I\in\mathscr{I}_{v}(f_0)\}
\end{equation*}
is a real stratification of the set $U\cap(\mathbb{C}\times\mathbb{C}^n)$ for which $U\cap V(f)$ is a union of strata. Note that for $I=\emptyset$, which is an element of $\mathscr{I}_{v}(f_0)$, the stratum 
\begin{equation*}
C_{\emptyset}:=U\cap(\mathbb{C}\times \mathbb{C}^{*\emptyset})
\end{equation*} 
of $\mathscr{S}$ is nothing but the $t$-axis $U\cap(\mathbb{C}\times \{\mathbf{0}\})$.

\begin{definition}\label{def-admissible}
We say that the family of mixed polynomial functions $\{f_t\}$ is \emph{admissible} (at $t=0$) if it satisfies the above conditions ($\mbox{C}_1$) and ($\mbox{C}_2$), and if furthermore, there exists a positive number $\rho>0$ such that:
\begin{enumerate}
\item
$R\geq \rho$;
\item
$r_{nc}(f_t) \geq \rho$ for any sufficiently small $t$;
\end{enumerate}
where $R$ is given by Proposition \ref{lemmasmooth} and $r_{nc}(f_t)$ is the radius of local tameness of~$f_t$.
\end{definition}

In particular, if the family $\{f_t\}$ is admissible, then it is \emph{uniformly} locally tame along the vanishing coordinates subspaces---that is, $f_t$ is locally tame along $\mathbb{C}^I$ for any $I\in\mathscr{I}_{v}(f_0)$ and $r_{nc}(f_t)\geq\rho$ for all small $t$.

Here is our main result. 

\begin{theorem}\label{mt2}
If the family of mixed polynomial functions $\{f_t\}$ is admissible, then the canonical stratification $\mathscr{S}$ of $U\cap(\mathbb{C}\times\mathbb{C}^n)$ described above is Whitney $(b)$-regular. In particular, the corresponding family of mixed hypersurfaces $\{V(f_t)\}$ is Whitney equisingular, and hence, topologically equisingular. Moreover, the stratification $\mathscr{S}$ satisfies Thom's $a_f$ condition with respect to the complement of the set of critical points of $f$.
\end{theorem}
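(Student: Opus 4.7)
The plan is to verify Whitney $(b)$-regularity pair-by-pair for the strata of $\mathscr{S}$, and separately Thom's $a_f$ condition at each stratum inside $V(f)$. Since $C_\emptyset$ is the $t$-axis by construction, once Whitney $(b)$-regularity of $\mathscr{S}$ is established, the equisingularity of $\{V(f_t)\}$ (and, via the Thom--Mather first isotopy theorem, its topological triviality) is automatic. By Propositions~\ref{prop-suitearc1} and~\ref{prop-suitearc2}, both regularity conditions can be tested along real analytic paths, and I would argue by contradiction in each case using the curve selection lemma, following the blueprint of the proofs of Propositions~\ref{lemmasmooth} and~\ref{lemma-transversality}. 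The pairs requiring non-trivial checking are $(C_I, A_J)$, $(C_I, B_J)$, $(C_I, C_J)$ with $I \subsetneq J$, $(A_I, B_I)$, and pairs of the form $(A_I, A_J)$, $(A_I, B_J)$, $(B_I, A_J)$, $(B_I, B_J)$ with $I \subsetneq J$; the crucial one for equisingularity is $(C_\emptyset, A_I)$.

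For a typical pair $(S_1, S_2)$, suppose Whitney $(b)$ fails; by Proposition~\ref{prop-suitearc1} there exist real analytic paths $\mathbf{p}(s) \in S_1$, $\mathbf{q}(s) = (t(s), \mathbf{z}(s)) \in S_2$ and a scalar $a(s) \in \mathbb{C}$ with $\mathbf{p}(0) = \mathbf{q}(0) = \mathbf{p}$, $T_{\mathbf{q}(s)} S_2 \to T$ and $a(s)(\mathbf{p}(s) - \mathbf{q}(s)) \to v \notin T$. Writing $z_i(s) = a_i s^{w_i} + \cdots$ for each $i$ in the support $K := \{i \mid z_i(s) \not\equiv 0\}$ and setting $w_i = 0$ for $i \notin K$ produces a weight vector $\mathbf{w}$; this determines a face $\Delta_\mathbf{w}$ of $\Gamma_+(f_0; \mathbf{z}, \bar{\mathbf{z}})$ which, as in Proposition~\ref{lemma-transversality}, is either a compact face of $\Gamma(f_0; \mathbf{z}, \bar{\mathbf{z}})$ (if $K = \{1,\ldots,n\}$) or an essential non-compact face with $I_{\Delta_\mathbf{w}} = \{1,\ldots,n\} \setminus K \in \mathscr{I}_v(f_0)$ (otherwise). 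Setting $\mathbf{a} := (a_1, \ldots, a_n)$ and expanding $\bar{\partial} g_{t(s), \bar{t}(s)}(\mathbf{z}(s), \bar{\mathbf{z}}(s))$ and $\bar{\partial} h_{t(s), \bar{t}(s)}(\mathbf{z}(s), \bar{\mathbf{z}}(s))$ via identities analogous to~(\ref{lemmasmooth-eq1}), I would pin down the limit plane $T$ in terms of the face-function gradients $\bar{\partial}(g_0)_{\Delta_\mathbf{w}}(\mathbf{a}, \bar{\mathbf{a}})$ and $\bar{\partial}(h_0)_{\Delta_\mathbf{w}}(\mathbf{a}, \bar{\mathbf{a}})$; by the reparametrisation trick $s \mapsto z_i(s/b_i)$, one may assume $\sum |a_i|^2 < \min\{\rho^2, R^2\}$, which is where uniform admissibility enters.

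The contradiction in each case comes from either the strong non-degeneracy of $(f_0)_{\Delta_\mathbf{w}}$ on $(\mathbb{C}^*)^n$ (compact case) or the local tameness of $(f_0)_{\Delta_\mathbf{w}}$ along $\mathbb{C}^{I_{\Delta_\mathbf{w}}}$ (essential non-compact case), both of which rule out the critical-point equation for $\mathbf{a}$ that the hypothetical failure of Whitney $(b)$ would force. The $t$-direction is handled cleanly because $f$ does not depend on $\bar{t}$, so the Newton-polygon machinery in $\mathbf{z}$ is insensitive to the expansion $t(s) = t_0 s^v + \cdots$; the radial Euler identity for the radially weighted homogeneous face function $(f_0)_{\Delta_\mathbf{w}}$ (cf.~\cite[\S 2]{O3}) then supplies the missing containment $v \in T$. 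For Thom's $a_f$ condition at a stratum $S \subseteq V(f)$, I would repeat the same curve-selection and Laurent-expansion scheme using Proposition~\ref{prop-suitearc2} applied to the nearby fibres $V(f - f(\tau(s), \mathbf{q}(s), \bar{\mathbf{q}}(s)))$, relying on Proposition~\ref{lemma-transversality} for their mixed smoothness; the same admissibility input forces $T_{(\tau(0), \mathbf{q}(0))} S$ into the limit tangent space.

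I expect the main obstacle to be the pair $(C_\emptyset, A_I)$, and more generally $(C_I, A_J)$, where one must track simultaneously the leading orders of $\bar{\partial} g_{t(s), \bar{t}(s)}$, $\bar{\partial} h_{t(s), \bar{t}(s)}$, the difference $\mathbf{p}(s) - \mathbf{q}(s)$, and the scalar $a(s)$. Because $f$ is mixed, the real and imaginary parts $g$ and $h$ have independent leading behaviour in $s$ and both depend on $\bar{t}$ (even though $f$ does not), so the bookkeeping is genuinely heavier than in the holomorphic case treated in~\cite{EO}; the required containment $v \in T$ must be extracted from the real-orthogonality condition defining $T$ using \emph{both} $\bar{\partial} g$ and $\bar{\partial} h$, which is where the radial Euler identity combined with the admissibility bounds becomes indispensable and where the essentially ``mixed'' ingredient enters.
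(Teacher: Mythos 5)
Your outline captures the broad machinery the paper uses---curve selection, Laurent expansions, face functions, the radial Euler identity, and where uniform local tameness enters---but it misses, and in fact seems to assume away, the single most delicate point of the mixed case. You write that ``the real and imaginary parts $g$ and $h$ have independent leading behaviour in $s$,'' but this is precisely what can fail. After normalizing by the leading powers, the limits $(\bar{\partial} g)_\infty$ and $(\bar{\partial} h)_\infty$ of $\bar{\partial} g(\zeta(s))$ and $\bar{\partial} h(\zeta(s))$ can become $\mathbb{R}$-linearly dependent (this happens exactly when $i(g)=i(h)$ and the leading coefficients of $v_{g,i(g)}$ and $v_{h,i(h)}$ are $\mathbb{R}$-proportional). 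When that happens, $\lim T_{(t(s),\mathbf{z}(s))}A_J$ is \emph{not} $(\bar{\partial} g)_\infty^\bot\cap(\bar{\partial} h)_\infty^\bot$, so the direct verification you propose breaks down. The paper's resolution---splitting into cases $(\mathrm{A}_1)$ and $(\mathrm{A}_2)$ and, in case $(\mathrm{A}_2)$, replacing $v_h$ by $v_h'=v_h-(c_h/c_g)\,s^{o(h)-o(g)}v_g$ and iterating this ``killing'' of the leading coefficient until the modified leading terms become $\mathbb{R}$-independent, with termination guaranteed by the bound $o^{(k)}(h)\le d_{\mathbf{w}}-w_{m+1}$ coming from the $\mathbb{R}$-linear independence of the truncated face-gradients provided by uniform local tameness---is exactly the ``essential new argument'' that distinguishes the mixed proof from the holomorphic one, and it does not appear in your proposal at all.

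Two smaller mismatches with the paper's route. First, you frame the Whitney $(b)$ verification as a contradiction argument ``ruling out the critical-point equation for $\mathbf{a}$.'' That is the mechanism in Propositions~\ref{lemmasmooth} and~\ref{lemma-transversality}, but the Whitney $(b)$ proof itself is a \emph{direct} verification of the containment $\ell_\infty\in T$; non-degeneracy and local tameness are used there not to contradict a critical-point equation, but to guarantee the $\mathbb{R}$-linear independence of the truncated vectors in~(\ref{pp}), which in turn bounds the orders $o(g),o(h)$ and forces the limit gradients into the form~(\ref{mr}). Second, your list of pairs to check is much larger than necessary: by Remark~\ref{remarkstratification} (a manifold minus a closed submanifold is automatically Whitney $(b)$-regular over it), every pair except $(A_J,C_I)$ with $I\subseteq J$, $I\in\mathscr{I}_v(f_0)$, $J\in\mathscr{I}_{nv}(f_0)$ is handled trivially; recognizing this is what makes the argument tractable. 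Without the $(\mathrm{A}_1)/(\mathrm{A}_2)$ dichotomy and the iterative correction, however, the proposed proof would not go through even on the one pair that does require work.
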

 
Theorem \ref{mt2} generalizes to mixed polynomial functions a similar result for holomorphic polynomial functions proved by the authors in \cite{EO}. However, as  mixed hypersurfaces do not carry any complex structure, the proof in the  ``mixed'' case requires essential new arguments. 
In particular, in the holomorphic case, the tangent space to $V(f)$ at a non-singular point $\mathbf{a}$ is the complex orthogonal complement of the  subspace of $\mathbb{C}\times \mathbb{C}^n$ generated by the gradient vector $\mbox{grad}f(\mathbf{a})$ of $f$ at $\mathbf{a}$. To show that the Whitney $(b)$-regularity and the Thom condition hold, we need to investigate the limit of $\mbox{grad}f(\mathbf{a})$ as $\mathbf{a}$ approaches some point located on a vanishing coordinates subspace for $f$. In the mixed case, the tangent space to $V(f)$ at a mixed non-singular point $\mathbf{a}$ is the real subspace of $\mathbb{C}\times \mathbb{C}^n$  whose vectors are orthogonal in $\mathbb{R}^2\times \mathbb{R}^{2n}$ to the ($\mathbb{R}$-linearly independent) vectors $\bar\partial g(\mathbf{a},\bar{\mathbf{a}})$ and $\bar\partial h(\mathbf{a},\bar{\mathbf{a}})$, where $g:=\Re(f)$ and $h:=\Im(f)$ are the real and imaginary parts of $f$ respectively. In this case, to show that the Whitney $(b)$-regularity and the Thom condition hold, we need to simultaneously investigate the limits of both $\bar\partial g(\mathbf{a},\bar{\mathbf{a}})$ and $\bar\partial h(\mathbf{a},\bar{\mathbf{a}})$.

Note that, in general, when $\mathbf{a}$  tends to some point located on a vanishing coordinates subspace for $f$, it may happen that the corresponding limits for the vectors $\bar\partial g(\mathbf{a},\bar{\mathbf{a}})$ and $\bar\partial h(\mathbf{a},\bar{\mathbf{a}})$ are no longer linearly independent over $\mathbb{R}$. This may causes serious trouble. However this problem can be easily resolved.

 By \cite[Theorem 20]{O1}, if the function $f$ itself is strongly non-degenerate and locally tame along its vanishing coordinates subspaces (i.e., the coordinates subspaces of $\mathbb{C}\times\mathbb{C}^n$ where $f$ vanishes), then $(\mathbb{C}\times\mathbb{C}^n,V(f))$ can also be endowed with a canonical stratification which satisfies the Thom $a_f$ condition. Note that in Theorem 20 of \cite{O1} the assumptions refer to the function $f$ itself whereas in Theorem \ref{mt2} above they refer to the members $f_t$ of the family.

Theorem \ref{mt2} is proved in Section \ref{pmt2}. The proof will show that in the special case where, in addition to the assumptions of the theorem, the functions $f_t$ are con\penalty 10000 venient, then the uniform local tameness assumption can be dropped (cf.~Remark \ref{rk-Ivide}). Thus, in this case, we have the following simplified statement.

\begin{theorem}\label{mt2-isolated}
Suppose that for all $t$ sufficiently small, the following two conditions are satisfied:
\begin{enumerate}
\item
the Newton boundary $\Gamma(f_t;\mathbf{z},\bar{\mathbf{z}})$ of $f_t$ at the origin is independent of $t$;
\item
the mixed polynomial function $f_t$ is convenient and strongly non-degenerate (in particular this implies that $f_t$ has an isolated mixed singularity at $\mathbf{0}$).
\end{enumerate}
Then the conclusions of Theorem \ref{mt2} still hold true.
\end{theorem}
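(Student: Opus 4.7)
The plan is to deduce Theorem \ref{mt2-isolated} directly from Theorem \ref{mt2} by exhibiting the family $\{f_t\}$ as admissible in the sense of Definition \ref{def-admissible}, and then observing that under the convenience hypothesis several of the ingredients of that definition become vacuous. Recall that $f_t$ is convenient precisely when $\Gamma(f_t;\mathbf{z},\bar{\mathbf{z}})$ meets every coordinate axis, which forces the restriction of $f_t$ to every non-trivial coordinates subspace $\mathbb{C}^I$ (with $I\neq\emptyset$) to be non-zero. Consequently $\mathscr{I}_{v}(f_t)=\{\emptyset\}$ for all small $t$, so the unique vanishing coordinates subspace is the origin $\mathbb{C}^{\emptyset}=\{\mathbf{0}\}$.

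Next, I would verify the two admissibility conditions. For $(\mbox{C}_1)$, a non-compact face of $\Gamma_+(f_t;\mathbf{z},\bar{\mathbf{z}})$ is essential only if its non-compact direction lies in $\mathscr{I}_v(f_t)$; since $\mathscr{I}_v(f_t)=\{\emptyset\}$ and the empty set corresponds to no non-compact direction at all, there are no essential non-compact faces, so $\Gamma_{nc}(f_t;\mathbf{z},\bar{\mathbf{z}})=\Gamma(f_t;\mathbf{z},\bar{\mathbf{z}})$, which is independent of $t$ by hypothesis. For $(\mbox{C}_2)$, strong non-degeneracy is assumed, while local tameness along $\mathbb{C}^{\emptyset}$ is vacuous (there are no essential non-compact faces to check), so $r_{nc}(f_t)$ may be treated as $+\infty$ and the uniform bound $r_{nc}(f_t)\geq \rho$ of Definition \ref{def-admissible}(2) is automatic. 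Taking $\rho:=R$, where $R$ is the radius furnished by Proposition \ref{lemmasmooth}, also ensures condition (1) of the definition.

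With admissibility in hand, Theorem \ref{mt2} applies and yields at once the Whitney $(b)$-regularity of the canonical stratification $\mathscr{S}$, the Whitney (and hence topological) equisingularity of $\{V(f_t)\}$, and the Thom $a_f$ condition with respect to the complement of $\Sigma f$. The parenthetical remark that $f_t$ has an isolated mixed singularity at the origin follows from Proposition \ref{lemmasmooth} applied to each $I\in\mathscr{I}_{nv}(f_0)=2^{\{1,\ldots,n\}}\setminus\{\emptyset\}$, together with the disjoint union decomposition $\mathbb{C}^n\setminus\{\mathbf{0}\}=\bigsqcup_{I\neq\emptyset}\mathbb{C}^{*I}$, which shows that $V(f_t)$ is mixed non-singular away from the origin in a uniform neighbourhood. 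No genuine obstacle is expected here: the entire content is the reduction that convenience eliminates the local-tameness machinery, and the real work is already performed by Theorem \ref{mt2}.
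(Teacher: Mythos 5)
Your proposal is correct and follows essentially the same route as the paper: the paper deduces Theorem~\ref{mt2-isolated} from the proof of Theorem~\ref{mt2} via Remark~\ref{rk-Ivide}, observing that convenience forces $\mathscr{I}_v(f_t)=\{\emptyset\}$, so the only pairs $(A_J,C_I)$ that remain to check have $I=\emptyset$, for which $\Delta_{\mathbf{w}}$ is a compact face and strong non-degeneracy replaces local tameness. Your packaging of the same observation as a direct verification that $\{f_t\}$ is admissible (with $r_{nc}(f_t)=+\infty$ by the usual $\min\emptyset=+\infty$ convention, since there are no essential non-compact faces) is a clean, logically equivalent way to reach the same conclusion, and your justification of the isolated-singularity parenthetical via Proposition~\ref{lemmasmooth} and the decomposition $\mathbb{C}^n\setminus\{\mathbf{0}\}=\bigsqcup_{I\neq\emptyset}\mathbb{C}^{*I}$ is also what the paper has in mind.
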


\section{Proof of Theorem \ref{mt2}}\label{pmt2}

We first prove that $\mathscr{S}$ is Whitney $(b)$-regular. Then we show that it satisfies Thom's $a_f$ condition with respect to the complement of $\Sigma f$.

\subsection{Whitney's $(b)$-regularity}\label{powbrc}

First of all, observe that if $I\subseteq J$, then $\mathbb{C}^{*I}$ is contained in the closure $\overline{\mathbb{C}^{*J}}$ of $\mathbb{C}^{*J}$. Moreover, if  $I\subseteq J$ and $J\in\mathscr{I}_v(f_0)$, then  $I\in\mathscr{I}_v(f_0)$ too. Therefore, to prove the theorem, it suffices to check that the Whitney $(b)$-regularity condition holds for all pairs of strata satisfying one of the following three conditions:
\begin{enumerate}
\item 
$C_I \cap \overline{C_J}\not=\emptyset$ 
with $I\subseteq J$ and $I,J\in\mathscr{I}_{v}(f_0)$;
\item
$C_I \cap \overline{A_J}\not=\emptyset$ or $C_I \cap \overline{B_J}\not=\emptyset$
with $I\subseteq J$ and $I\in\mathscr{I}_v(f_0)$, $J\in\mathscr{I}_{nv}(f_0)$;
\item
$A_I \cap \overline{A_J}\not=\emptyset$,
$A_I \cap \overline{B_J}\not=\emptyset$ or
$B_I \cap \overline{B_J}\not=\emptyset$
with
$I\subseteq J$ and $I,J\in\mathscr{I}_{nv}(f_0)$.
\end{enumerate}
Except for the case of a pair of strata of the form $(A_J,C_I)$, the Whitney $(b)$-regularity condition immediately follows from Remark \ref{remarkstratification}. Thus, to prove our result, it suffices to show that for any $J\in\mathscr{I}_{nv}(f_0)$ and any $I\in\mathscr{I}_v(f_0)$, with $I\subseteq J$, the stratum
\begin{align*}
A_J:=U\cap(V(f)\cap (\mathbb{C}\times\mathbb{C}^{*J}))
\end{align*}
 is Whitney $(b)$-regular over the stratum
\begin{align*}
C_I:=U\cap(\mathbb{C}\times\mathbb{C}^{*I})
\end{align*}
 at any point $(\tau,\mathbf{q})=(\tau,q_1,\ldots,q_n)\in C_I \cap \overline{A_J}$ sufficiently close to the origin of $\mathbb{C}\times\mathbb{C}^n$. 
To simplify, we assume that $J=\{1,\ldots,n\}$ and $I=\{1,\ldots,m\}$ with $1\leq m\leq n-1$. (The argument is similar in the other cases; for $I=\emptyset$, see Remark \ref{rk-Ivide}.) In particular, $q_i\not=0$ if and only if $1\leq i\leq m$. 

Pick real analytic paths $(t(s),\mathbf{z}(s))$ and $(t'(s),{\mathbf{z}}'(s))$ 
such that:
\begin{enumerate}
\item
$(t(0),\mathbf{z}(0))=(t'(0),{\mathbf{z}}'(0))=(\tau,\mathbf{q})$;
\item
$(t'(s),{\mathbf{z}}'(s))\in C_I$ and $(t(s),\mathbf{z}(s))\in A_J$ for $s\not=0$.
\end{enumerate}
Write $\mathbf{z}(s):=(z_1(s),\ldots,z_n(s))$ and $\mathbf{z}'(s):=(z'_1(s),\ldots,z'_n(s))$, and look at the Taylor expansions:
\begin{align*}
& t(s)=\tau+b_0s+\cdots,\quad
z_i(s)=a_i s^{w_i}+b_i s^{w_i+1}+\cdots,\\
& t'(s)=\tau+b'_0 s+\cdots, \quad
z'_i(s)=q_i +b'_i s+\cdots,
\end{align*}
where $w_i=0$ and $a_i=q_i$ for $1\leq i\leq m$ while $w_i>0$ and $a_i\not=0$ for $i>m$. (As above, the dots stand for the higher order terms.) Note that, for any $s$, we have $z'_i(s)=0$ if $i>m$.
Set
\begin{equation*}
\ell(s):=(\ell_0(s),\ell_1(s),\ldots,\ell_n(s)),
\end{equation*}
where
\begin{equation*}
\left\{
\begin{aligned}
& \ell_0(s):=t(s)-t'(s)=(b_0-b'_0)s+\cdots,\\
& \ell_i(s):=z_i(s)-z'_i(s)=\left\{
\begin{aligned}
& (b_i-b'_i)s+\cdots && \mbox{ for}\quad 1\leq i\leq m,\\
& a_i s^{w_i}+\cdots && \mbox{ for}\quad m+1\leq i\leq n.
\end{aligned}
\right.
\end{aligned}
\right.
\end{equation*}
By reordering, we may suppose that
\begin{equation}\label{assumptionwi}
\begin{aligned}
w_{m+1}=\cdots=w_{m+m_1}<w_{m+m_1+1}=\cdots=w_{m+m_1+m_2}<\cdots\\
\cdots < w_{m+m_1+\cdots+m_{k-1}+1}=\cdots=w_{m+m_1+\cdots+m_{k}}=w_n,
\end{aligned}
\end{equation}
for some non-negative integers $m_1,\ldots,m_k\in\mathbb{N}$ with $m+m_1+\cdots+m_k=n$.
By Proposition \ref{prop-suitearc1}, to show that the pair of strata $(A_J,C_I)$ satisfies the Whitney $(b)$-regularity condition at the point $(\tau,\mathbf{q})$, it suffices to prove that 
\begin{equation}\label{lfwbrc}
\lim_{{s\to 0}\atop{s\not=0}} \frac{\ell(s)}{\Vert \ell(s) \Vert} \in 
\lim_{{s\to 0}\atop{s\not=0}} T_{(t(s),\mathbf{z}(s))} A_J,
\end{equation}
provided that these two limits exist. Hereafter, we suppose that these limits exist.

\begin{remark}
Observe that if $g:=\Re (f)$ and $h:=\Im (f)$ denote the real and imaginary parts of $f$ resp\-ectively, then, for all $s\not=0$,
\begin{equation}\label{rket}
T_{(t(s),\mathbf{z}(s))} A_J = \bar\partial g(\zeta(s))^\bot \cap 
\bar\partial h(\zeta(s))^\bot,
\end{equation}
where $\zeta(s):=(t(s),\bar t(s),\mathbf{z}(s),\bar{\mathbf{z}}(s))$.
(As usual, $\bar t(s)$ is the complex conjugate of $t(s)$ and $\bar{\mathbf{z}}(s):=(\bar{z}_1(s),\ldots,\bar{z}_n(s))$ where $\bar{z}_i(s)$ is the complex conjugate of $z_i(s)$. For any vector $v\in\mathbb{C}^N$, the symbol $v^\bot$ denotes the real orthogonal complement of the real subspace of $\mathbb{C}^N$ generated by $v$.)
\end{remark}

Let $o(\ell):=\mbox{min} \,
\{o(\ell_i) \mid 0\leq i\leq n\}$,
where $o(\ell_i)$ is the order in $s$ of the $i$th component $\ell_i(s)$ of $\ell(s)$.  Clearly,
$o(\ell)\leq w_{m+1}$, and
\begin{equation}\label{liml1}
\lim_{{s\to 0}\atop{s\not=0}}\frac{\ell(s)}{\vert s\vert^{o(\ell)}} = \left\{
\begin{aligned}
&(*,\underbrace{*,\ldots,*}_{m \mbox{ \tiny terms}},\underbrace{0,\ldots,0}_{n-m \mbox{ \tiny zeros}}) \quad\mbox{if}\quad o(\ell)<w_{m+1},\\
&(*,\underbrace{*,\ldots,*}_{m \mbox{ \tiny terms}},a_{m+1},\ldots,a_{m+m_1}, \underbrace{0,\ldots,0}_{n-m -m_1\mbox{ \tiny zeros}})
\quad\mbox{if}\quad o(\ell)=w_{m+1},
\end{aligned}
\right.
\end{equation}
where each term marked with a star ``$*$'' represents a complex number which may be zero or not.
Let $\mathbf{w}:=(w_1,\ldots,w_n)=(0,\ldots,0,w_{m+1},\ldots,w_n)$, and let 
\begin{equation*}
l_\mathbf{w}\colon \Gamma_{nc}(f_t;\mathbf{z},\bar{\mathbf{z}}) 
= \Gamma_{nc}(f_0;\mathbf{z},\bar{\mathbf{z}})\to \mathbb{R}
\end{equation*}
be the restriction of the linear map 
\begin{equation*}
(\xi_1,\ldots, \xi_n)\in \mathbb{R}^n\mapsto\sum_{1\leq i\leq n} \xi_i w_i\in \mathbb{R}. 
\end{equation*}
Denote by $d_\mathbf{w}$ the minimal value of $l_\mathbf{w}$, and write $\Delta_\mathbf{w}$ for the face of $\Gamma_{nc}(f_t;\mathbf{z},\bar{\mathbf{z}})=\Gamma_{nc}(f_0;\mathbf{z},\bar{\mathbf{z}})$ defined by the locus where $l_\mathbf{w}$ takes this minimal value. Clearly, $\Delta_\mathbf{w}$ is an essential non-compact face, and $I_{\Delta_\mathbf{w}}=I$.
Finally, let $\mathbf{a}:=(a_1,\ldots,a_n)$.
Then, for all $1\leq i\leq n$, 
\begin{equation}\label{f1-w}
\frac{\partial g}{\partial \bar{z}_i} (\zeta(s)) = \frac{\partial {(g_{\tau,\bar\tau})}_{\Delta_{\mathbf{w}}}}{\partial \bar{z}_i} (\mathbf{a},\bar{\mathbf{a}})\,  s^{d_\mathbf{w}-w_i}+\cdots, 
\end{equation}
 where $g_{\tau,\bar\tau}(\mathbf{z},\bar{\mathbf{z}}):=g(\tau,\bar\tau,\mathbf{z},\bar{\mathbf{z}})$ (i.e., $g_{\tau,\bar\tau}=\Re(f_\tau)$) and $(g_{\tau,\bar\tau})_{\Delta_{\mathbf{w}}}$ is the face function associated with $g_{\tau,\bar\tau}$ and $\Delta_{\mathbf{w}}$ (cf.~Remark \ref{rk-ffpri}), while
\begin{align}\label{ldt02-w}
\lim_{{s\to 0}\atop{s\not=0}}\biggl(\frac{1}{\vert s\vert^{d_{\mathbf{w}}-1}}\cdot
\frac{\partial g}{\partial \bar{t}} 
(\zeta(s))\biggr) 
= 0.
\end{align}
Similar expressions to (\ref{f1-w}) and (\ref{ldt02-w}) also hold true for the imaginary part $h$ of $f$. 
\begin{notation}\label{ltrunc}
For any vector $v=(v_1,\ldots,v_n)\in\mathbb{C}^n$ and any integers $i,j$ with $m+1\leq i\leq j\leq n$, we denote by $v_i^j$ the truncated vector $(v_i,\ldots,v_j)$.
\end{notation}
By the uniform local tameness (i.e., the condition $r_{nc}(f_t)\geq \rho$ for all small $t$), if $(\tau,\mathbf{q})$ is close enough to $(0,\mathbf{0})\in\mathbb{C}\times\mathbb{C}^n$, then the vectors
\begin{equation}\label{pp}
\begin{aligned}
& \bigl(\bar{\partial} {(g_{\tau,\bar\tau})}_{\Delta_{\mathbf{w}}}
(\mathbf{a},\bar{\mathbf{a}})\bigr)_{m+1}^{n}:=
\biggl(\frac{\partial {(g_{\tau,\bar\tau})}_{\Delta_{\mathbf{w}}}}{\partial \bar{z}_{m+1}} (\mathbf{a},\bar{\mathbf{a}}),\ldots,\frac{\partial {(g_{\tau,\bar\tau})}_{\Delta_{\mathbf{w}}}}{\partial \bar{z}_{n}} (\mathbf{a},\bar{\mathbf{a}})\biggr)\\
& \bigl(\bar{\partial} {(h_{\tau,\bar\tau})}_{\Delta_{\mathbf{w}}}
(\mathbf{a},\bar{\mathbf{a}})\bigr)_{m+1}^{n}:=
\biggl(\frac{\partial {(h_{\tau,\bar\tau})}_{\Delta_{\mathbf{w}}}}{\partial \bar{z}_{m+1}} (\mathbf{a},\bar{\mathbf{a}}),\ldots,\frac{\partial {(h_{\tau,\bar\tau})}_{\Delta_{\mathbf{w}}}}{\partial \bar{z}_{n}} (\mathbf{a},\bar{\mathbf{a}})\biggr)
\end{aligned}
\end{equation}
are linearly independent over $\mathbb{R}$.

\begin{notation}
Hereafter, to simplify, we set
\begin{align}\label{notation-vgvh}
v_{g,0}(s):=\frac{\partial g}{\partial \bar{t}}(\zeta(s))
\quad\mbox{and}\quad
v_{g,i}(s):=\frac{\partial g}{\partial \bar{z}_i}(\zeta(s))\quad (1\leq i\leq n),
\end{align}
and we write:\vskip 1mm
\begin{enumerate}
\item[$\cdot$]
$v_{g}(s):=(v_{g,1}(s),\ldots,v_{g,n}(s))$; \vskip 1mm
\item[$\cdot$]
$o(g):= \mbox{min}\, \{o(v_{g,i}) \mid 1\leq i\leq n\}$,
where $o(v_{g,i})$ is the order in $s$ of $v_{g,i}(s)$;\vskip 1mm
\item[$\cdot$]
$i(g):=\mbox{max}\, \{i\mid 1\leq i\leq n 
\mbox{ and }  o(v_{g,i})=o(g)\}$.
\end{enumerate}
\vskip 1mm
Similarly, we define $v_{h,0}(s)$, $v_{h,i}(s)$, $v_{h}(s)$, 
$o(h)$ and $i(h)$. 
\end{notation}

For simplicity, in the case where $i(g)=i(h)$, we assume $o(g)\leq o(h)$. (This is always possible by exchanging $g$ and $h$, considering $\sqrt{-1}\, f(t,\mathbf{z},\bar{\mathbf{z}})$ if necessary.)
The index $i(g)$ is called the \emph{essential} index of $v_{g}(s)$.  The coefficient of $s^{o(g)}$ in the expansion of $v_{g,i(g)}$ is called the \emph{leading} coefficient of $v_{g,i(g)}$. Similar definitions apply to the function~$h$.

Combined with the expression (\ref{f1-w}), the $\mathbb{R}$-linear independence of the vectors in (\ref{pp}) shows that $o(g)=\mbox{min}\, \{o(v_{g,i}) \mid m+1\leq i\leq n\}$, $i(g):=\mbox{max}\, \{i\mid m+1\leq i\leq n \mbox{ and } o(v_{g,i})=o(g)\}$, and
\begin{equation}\label{eqn-ogoh}
o(g)\leq d_{\mathbf{w}}-w_{m+1}\leq d_{\mathbf{w}}-1,
\end{equation}
with similar properties for $h$. Thus, by (\ref{ldt02-w}),
\begin{equation}\label{mr1}
\lim_{{s\to 0}\atop{s\not=0}}\biggl(\frac{1}{\vert s\vert^{o(g)}}\cdot v_{g,0} (s)\biggr) =
\lim_{{s\to 0}\atop{s\not=0}}\biggl(\frac{1}{\vert s\vert^{o(g)}}\cdot\frac{\partial g}{\partial \bar{t}} (\zeta(s))\biggr) = 0.
\end{equation}
Let us denote by $(\bar{\partial} g)_\infty$ the non-zero vector of $\mathbb{C}\times\mathbb{C}^n$ defined by
\begin{equation*}
(\bar{\partial} g)_\infty  :=\lim_{{s\to 0}\atop{s\not=0}} \frac{\bar{\partial} g(\zeta(s))}{\vert s\vert^{o(g)}} = \lim_{{s\to 0}\atop{s\not=0}} \frac{(v_{g,0} (s),v_g(s))}{\vert s\vert^{o(g)}}.
\end{equation*}
By (\ref{f1-w}), (\ref{eqn-ogoh}) and (\ref{mr1}), we have
\begin{equation}\label{mr}
(\bar{\partial} g)_\infty = \left\{
\begin{aligned}
& (0,\underbrace{0,\ldots,0}_{m+m_1 \mbox{ \tiny zeros}},\underbrace{*,\ldots,*}_{\ n-m-m_1 \mbox{ \tiny terms}}) \quad \mbox{if}\quad  o(g)<d_{\mathbf{w}}-w_{m+1},\\
& \biggl(0,\underbrace{0,\ldots,0}_{m\mbox{ \tiny zeros}},
\bigl(\bar{\partial} (g_{\tau,\bar\tau})_{\Delta_{\mathbf{w}}}(\mathbf{a},\bar{\mathbf{a}})\bigr)_{m+1}^{m+m_1},\underbrace{*,\ldots,*}_{n-m-m_1 \mbox{ \tiny terms}}\biggr)\\ 
& \mbox{\hskip 4.4cm if}\quad o(g)=d_{\mathbf{w}}-w_{m+1},
\end{aligned}
\right.
\end{equation}
where again each term marked with a star ``$*$'' represents a complex number which may be zero or not.
Define $(\bar{\partial} h)_\infty$ similarly. Then (\ref{mr}) also holds for $h$ instead of $g$.
Now the proof divides into two cases:
\begin{enumerate}
\item[($\mbox{A}_1$)]
either $i(g)\not=i(h)$ or $i(g)=i(h)$ and the leading coefficients of $v_{g,i(g)}$ and $v_{h,i(h)}$ are linearly independent over $\mathbb{R}$;
\item[($\mbox{A}_2$)]
$i(g)=i(h)$ and the leading coefficients of $v_{g,i(g)}$ and $v_{h,i(h)}$ are linearly dependent over $\mathbb{R}$.
\end{enumerate}

The first case is not so difficult. Indeed, under the assumption ($\mbox{A}_1$), the vectors $(\bar{\partial} g)_\infty$ and $(\bar{\partial} h)_\infty$ are linearly independent over $\mathbb{R}$. 
Therefore, 
\begin{align*}
\lim_{{s\to 0}\atop{s\not=0}} T_{(t(s),\mathbf{z}(s))} A_J 
& \overset{\mbox{\tiny by (\ref{rket})}}{=} 
\lim_{{s\to 0}\atop{s\not=0}} \bigl(\bar{\partial} g(\zeta(s))^{\bot} \cap \bar{\partial} h(\zeta(s))^{\bot}\bigr) \\
& =
\lim_{{s\to 0}\atop{s\not=0}} \biggl(\frac{\bar{\partial} g(\zeta(s))}{\vert s\vert^{o(g)}}^{\bot} \cap \frac{\bar{\partial} h(\zeta(s))}{\vert s\vert^{o(h)}}^{\bot}\biggr) 
 = (\bar{\partial} g)_\infty^{\bot} \cap (\bar{\partial} h)_\infty^{\bot}.
\end{align*}
Thus, to show that the relation (\ref{lfwbrc}) does hold true, we must prove that
\begin{equation*}
\ell_\infty:=\lim_{{s\to 0}\atop{s\not=0}} \frac{\ell(s)}{\Vert \ell(s) \Vert} \in (\bar{\partial} g)_\infty^{\bot} \cap (\bar{\partial} h)_\infty^{\bot}.
\end{equation*}
As $\Vert \ell(s) \Vert\sim c\vert s \vert^{o(\ell)}$ as $s\to 0$ (where $c$ is a constant), it follows immediately from (\ref{liml1}) and (\ref{mr}) that $\ell_\infty\in(\bar{\partial} g)_\infty^{\bot}$
 if $o(g)< d_{\mathbf{w}}-w_{m+1}$ or if $o(g)=d_{\mathbf{w}}-w_{m+1}$ and $o(\ell)<w_{m+1}$. To show this is also true when $o(g)=d_{\mathbf{w}}-w_{m+1}$ and $o(\ell)=w_{m+1}$, we must prove that
\begin{align}\label{pheazero-w}
\Re \biggl(\sum_{i=m+1}^{m+m_1}  \bar{a}_i\frac{\partial ({g_{\tau,\bar\tau}})_{\Delta_{\mathbf{w}}}}{\partial \bar{z}_{i}}(\mathbf{a},\bar{\mathbf{a}})\biggr)=0.
\end{align}
The polynomial $(g_{\tau,\bar\tau})_{\Delta_{\mathbf{w}}}$ is radially weighted homogeneous of type $(w_1,\ldots,w_n;d_{\mathbf{w}})$. Then, by the radial Euler identity, for any $\mathbf{z}=(z_1,\ldots,z_n)$, we have
\begin{align}\label{euler-radial}
d_{\mathbf{w}}\cdot ({g_{\tau,\bar\tau}})_{\Delta_{\mathbf{w}}}(\mathbf{z},\bar{\mathbf{z}})=\sum_{i=m+1}^n w_i z_i\frac{\partial ({g_{\tau,\bar\tau}})_{\Delta_{\mathbf{w}}}}{\partial z_{i}}(\mathbf{z},\bar{\mathbf{z}}) + w_i \bar{z}_i\frac{\partial ({g_{\tau,\bar\tau}})_{\Delta_{\mathbf{w}}}}{\partial \bar{z}_{i}}(\mathbf{z},\bar{\mathbf{z}}).
\end{align}
(Remind that $w_i=0$ for $1\leq i\leq m$.)
As $(t(s),\mathbf{z}(s))\in A_J$ for any $s\not=0$, the expression
\begin{equation*}
f(t(s),\mathbf{z}(s),\bar{\mathbf{z}}(s))=g(\zeta(s))+\sqrt{-1}\, h(\zeta(s))
\end{equation*} 
vanishes for all $s$, and hence $g(\zeta(s))=h(\zeta(s))=0$ for all $s$. It follows that $({g_{\tau,\bar\tau}})_{\Delta_{\mathbf{w}}}(\mathbf{a},\bar{\mathbf{a}})=0$. Therefore, applying (\ref{euler-radial}) with $\mathbf{z}=\mathbf{a}$ gives
\begin{equation}\label{euler-radial-ata}
\begin{aligned}
0 & = \sum_{i=m+1}^n w_i a_i\frac{\partial 
({g_{\tau,\bar\tau}})_{\Delta_{\mathbf{w}}}}{\partial z_{i}}(\mathbf{a},\bar{\mathbf{a}}) + w_i \bar{a}_i\frac{\partial ({g_{\tau,\bar\tau}})_{\Delta_{\mathbf{w}}}}{\partial \bar{z}_{i}}(\mathbf{a},\bar{\mathbf{a}})\\
& =2\Re \biggl(\sum_{i=m+1}^n w_i \bar{a}_i\frac{\partial ({g_{\tau,\bar\tau}})_{\Delta_{\mathbf{w}}}}{\partial \bar{z}_{i}}(\mathbf{a},\bar{\mathbf{a}})\biggr).
\end{aligned}
\end{equation}
Now, combined with (\ref{assumptionwi}) and (\ref{f1-w}), the equality $o(g)=d_{\mathbf{w}}-w_{m+1}$ implies 
\begin{equation*}
\frac{\partial {(g_{\tau,\bar\tau})}_{\Delta_{\mathbf{w}}}}{\partial \bar{z}_i} (\mathbf{a},\bar{\mathbf{a}})=0
\end{equation*}
for any $i>m+m_1$. Thus (\ref{euler-radial-ata}) says 
\begin{equation*}
0=2\Re \biggl(\sum_{i=m+1}^{m+m_1} w_i \bar{a}_i\frac{\partial ({g_{\tau,\bar\tau}})_{\Delta_{\mathbf{w}}}}{\partial \bar{z}_{i}}(\mathbf{a},\bar{\mathbf{a}})\biggr) =
2w_{m+1}\Re \biggl(\sum_{i=m+1}^{m+m_1}  \bar{a}_i\frac{\partial ({g_{\tau,\bar\tau}})_{\Delta_{\mathbf{w}}}}{\partial \bar{z}_{i}}(\mathbf{a},\bar{\mathbf{a}})\biggr).
\end{equation*}
As $w_{m+1}>0$, the equality (\ref{pheazero-w}) follows.

The same argument applied to the function $h$ shows that $\ell_\infty\in (\bar{\partial} h)_\infty^{\bot}$ as well, so that if ($\mbox{A}_1$) holds, then  $\ell_\infty\in (\bar{\partial} g)_\infty^{\bot}\cap (\bar{\partial} h)_\infty^{\bot}$ as desired. 

The case ($\mbox{A}_2$) is more delicate. Here, we shall use the same technique as that developed in the proof of Theorem 20 of \cite{O1}. First, we observe that if $c_g$ and $c_h$ denote the leading coefficients of $v_{g,i(g)}$ and $v_{h,i(h)}$, respectively, then ($\mbox{A}_2$) says that the quotient $c_h/c_g$ is a non-zero real number. Then we replace $v_h(s)$ by
\begin{equation*}
v_{h}'(s):=v_{h}(s)-\frac{c_h}{c_g}\,  s^{o(h)-o(g)} v_g(s)
\end{equation*}
in order to kill the coefficient $c_h$ of $s^{o(h)}$ in $v_{h,i(h)}$. 
Thus the $i$th component of $v_{h}'(s)$ is given by
\begin{equation}\label{fap}
\begin{aligned}
v_{h,i}'(s) & = v_{h,i}(s)-\frac{c_h}{c_g}\,  s^{o(h)-o(g)} v_{g,i}(s)\\
& = \biggl(\frac{\partial {(h_{\tau,\bar\tau})}_{\Delta_{\mathbf{w}}}}{\partial \bar{z}_{i}} (\mathbf{a},\bar{\mathbf{a}})-\frac{c_h}{c_g}\, \epsilon\frac{\partial {(g_{\tau,\bar\tau})}_{\Delta_{\mathbf{w}}}}{\partial \bar{z}_{i}} (\mathbf{a},\bar{\mathbf{a}})\biggr) s^{d_{\mathbf{w}}-w_i}+\cdots,
\end{aligned}
\end{equation}
where $\varepsilon$ is $0$ or $1$ according to $o(h)>o(g)$ or $o(g)=o(h)$.
By (\ref{pp}), the vectors
\begin{equation}\label{ppp}
\begin{aligned}
\bigl(\bar{\partial} {(g_{\tau,\bar\tau})}_{\Delta_{\mathbf{w}}}
(\mathbf{a},\bar{\mathbf{a}})\bigr)_{m+1}^{n}
\quad\mbox{and}\quad
\bigl(\bar{\partial} {(h_{\tau,\bar\tau})}_{\Delta_{\mathbf{w}}}
(\mathbf{a},\bar{\mathbf{a}})-\frac{c_h}{c_g}\, \epsilon\bar{\partial} {(g_{\tau,\bar\tau})}_{\Delta_{\mathbf{w}}}(\mathbf{a},\bar{\mathbf{a}})\bigr)_{m+1}^{n}
\end{aligned}
\end{equation}
are linearly independent over $\mathbb{R}$, and as above, this implies 
\begin{equation}\label{sap}
\begin{aligned}
o'(h) & :=\mbox{min}\{o(v'_{h,i})\mid 1\leq i\leq n\} = \mbox{min}\{o(v'_{h,i})\mid m+1\leq i\leq n\}\\
& \leq d_{\mathbf{w}}-w_{m+1}.
\end{aligned}
\end{equation}
(As usual, by $o(v'_{h,i})$ we mean the order in $s$ of $v'_{h,i}(s)$.)
Thus, by (\ref{ldt02-w}) (for $h$ instead of $g$), if $v'_{h,0}(s)$ denotes the expression $v_{h,0} (s)-({c_h}/{c_g})\, \epsilon\,  v_{g,0}(s)$, we have
\begin{equation}\label{tap}
\lim_{{s\to 0}\atop{s\not=0}}\biggl(\frac{1}{\vert s\vert^{o'(h)}}\cdot v'_{h,0}(s)\biggr)=0.
\end{equation}
Let $(\bar{\partial} h)'_{\infty}$ be the non-zero vector of $\mathbb{C}\times \mathbb{C}^{n}$ defined by
\begin{equation*}
(\bar{\partial} h)'_{\infty} := \lim_{{s\to 0}\atop{s\not=0}} \frac{(v'_{h,0} (s),v'_h(s))}{\vert s\vert^{o'(h)}}.
\end{equation*}
By (\ref{fap}), (\ref{sap}) and (\ref{tap}), we have
\begin{equation}\label{liml2}
(\bar{\partial} h)'_{\infty}=
\left\{
\begin{aligned}
& (0,\underbrace{0,\ldots,0}_{m+m_1 \mbox{ \tiny zeros}},\underbrace{*,\ldots,*}_{\ n-m-m_1 \mbox{ \tiny terms}}) \quad \mbox{if}\quad  o'(h)<d_{\mathbf{w}}-w_{m+1},\\
& \biggl(0,\underbrace{0,\ldots,0}_{m\mbox{ \tiny zeros}},
\bigl(\bar{\partial} (h_{\tau,\bar\tau})_{\Delta_{\mathbf{w}}}(\mathbf{a},\bar{\mathbf{a}})
-\frac{c_h}{c_g}\, \varepsilon\,  \bar{\partial} (g_{\tau,\bar\tau})_{\Delta_{\mathbf{w}}}(\mathbf{a},\bar{\mathbf{a}})
\bigr)_{m+1}^{m+m_1},\underbrace{*,\ldots,*}_{n-m-m_1 \mbox{ \tiny terms}}\biggr)\\ 
& \mbox{\hskip 4.4cm if}\quad o'(h)=d_{\mathbf{w}}-w_{m+1}.
\end{aligned}
\right.
\end{equation}
Here, as above, the proof divides into two cases:
\begin{enumerate}
\item[($\mbox{A}'_1$)]
either $i'(h)\not=i(g)$ or $i'(h)=i(g)$ and the leading coefficients of $v'_{h,i'(h)}$  and $v_{g,i(g)}$ are linearly independent over $\mathbb{R}$;
\item[($\mbox{A}'_2$)]
$i'(h)=i(g)$ and the leading coefficients of  $v'_{h,i'(h)}$ and $v_{g,i(g)}$ are linearly dependent over $\mathbb{R}$;
\end{enumerate}
where $i'(h)\geq m+1$ is the essential index of $v'(h)$.

Under the assumption ($\mbox{A}'_1$), the vectors $(\bar{\partial} g)_\infty$ et $(\bar{\partial} h)'_\infty$ are linearly independent over $\mathbb{R}$. Therefore,
\begin{align*}
\lim_{{s\to 0}\atop{s\not=0}} T_{(t(s),\mathbf{z}(s))} A_J & =
\lim_{{s\to 0}\atop{s\not=0}} \bigl(\bar{\partial} g(\zeta(s))^{\bot} \cap \bar{\partial} h(\zeta(s))^{\bot}\bigr) 
 = \lim_{{s\to 0}\atop{s\not=0}} \bigl(\bar{\partial} g(\zeta(s))^{\bot} \cap (v'_{h,0}{(s), v'_h(s))}^{\bot}\bigr) \\
& = \lim_{{s\to 0}\atop{s\not=0}} \biggl(\frac{\bar{\partial} g(\zeta(s))}{\vert s\vert^{o(g)}}^{\bot} \cap \frac{\bigl(v'_{h,0}(s), v'_h(s)\bigr)}{\vert s\vert^{o'(h)}}^{\bot}\biggr) 
 = (\bar{\partial} g)_\infty^{\bot} \cap {(\bar{\partial} h)'_\infty}^{\bot},
\end{align*}
and in order to show that the relation (\ref{lfwbrc}) does hold true, we must prove that
\begin{equation*}
\ell_\infty\in (\bar{\partial} g)_\infty^{\bot} \cap 
{(\bar{\partial} h)'_\infty}^{\bot}.
\end{equation*}
We already know that $\ell_\infty \in (\bar{\partial} g)_\infty^{\bot}$. It remains to prove that $\ell_\infty \in {(\bar{\partial} h)'_\infty}^{\bot}$.
If $o'(h)< d_{\mathbf{w}}-w_{m+1}$ or $o'(h)=d_{\mathbf{w}}-w_{m+1}$ and $o(\ell)<w_{m+1}$, this follows immediately from the relations (\ref{liml1}) and (\ref{liml2}). If $o'(h)=d_{\mathbf{w}}-w_{m+1}$ and $o(\ell)=w_{m+1}$, then we must prove that
\begin{align}\label{egaliteam-w}
\Re\biggl(\sum_{i=m+1}^{m+m_1} \bar{a}_i\biggl(\frac{\partial ({h_{\tau,\bar\tau}})_{\Delta_{\mathbf{w}}}}{\partial \bar{z}_{i}}(\mathbf{a},\bar{\mathbf{a}})-\frac{c_h}{c_g}\, \varepsilon \frac{\partial ({g_{\tau,\bar\tau}})_{\Delta_{\mathbf{w}}}}{\partial \bar{z}_{i}}(\mathbf{a},\bar{\mathbf{a}})\biggr)\biggr)=0.
\end{align}
As $(t(s),\mathbf{z}(s))\in A_J$ for any $s\not=0$, we have $g(\zeta(s))=h(\zeta(s))=0$ for all $s$.
Thus,
\begin{align*}
\frac{d}{ds}\biggl(h(\zeta(s))-\frac{c_h}{c_g}\, \varepsilon\,  s^{o(h)-o(g)} g(\zeta(s))\biggr)=0,
\end{align*}
and by taking the coefficient of the term with lowest degree (i.e., the coefficient of $s^{d_{\mathbf{w}}-1}$), we get
\begin{align}\label{numersuppl}
2 \Re\biggl(\sum_{i=m+1}^n w_i \bar{a}_i\biggl(\frac{\partial ({h_{\tau,\bar\tau}})_{\Delta_{\mathbf{w}}}}{\partial \bar{z}_{i}}(\mathbf{a},\bar{\mathbf{a}})-\frac{c_h}{c_g}\, \varepsilon \frac{\partial ({g_{\tau,\bar\tau}})_{\Delta_{\mathbf{w}}}}{\partial \bar{z}_{i}}(\mathbf{a},\bar{\mathbf{a}})\biggr)\biggr)=0.
\end{align}
As $o'(h)=d_{\mathbf{w}}-w_{m+1}$, we have
\begin{align*}
\frac{\partial ({h_{\tau,\bar\tau}})_{\Delta_{\mathbf{w}}}}{\partial \bar{z}_{i}}(\mathbf{a},\bar{\mathbf{a}})-\frac{c_h}{c_g}\, \varepsilon \frac{\partial ({g_{\tau,\bar\tau}})_{\Delta_{\mathbf{w}}}}{\partial \bar{z}_{i}}(\mathbf{a},\bar{\mathbf{a}})=0
\end{align*}
for all $m+m_1< i\leq n$. Thus (\ref{numersuppl}) says 
\begin{align*}
2 w_{m+1}\Re\biggl(\sum_{i=m+1}^{m+m_1} \bar{a}_i\biggl(\frac{\partial ({h_{\tau,\bar\tau}})_{\Delta_{\mathbf{w}}}}{\partial \bar{z}_{i}}(\mathbf{a},\bar{\mathbf{a}})-\frac{c_h}{c_g}\, \varepsilon \frac{\partial ({g_{\tau,\bar\tau}})_{\Delta_{\mathbf{w}}}}{\partial \bar{z}_{i}}(\mathbf{a},\bar{\mathbf{a}})\biggr)\biggr)=0.
\end{align*}
As $w_{m+1}>0$, the equality (\ref{egaliteam-w}) follows, and this completes the proof under the assumption ($\mbox{A}'_1$).

Now if the essential indices $i'(h)$ and $i(g)$ of $v'(h)$ and $v(g)$, respectively, are equal and if the leading coefficients of $v'_{h,i'(h)}$ and $v_{g,i(g)}$  are linearly dependent over $\mathbb{R}$ (i.e., if we are under the assumption ($\mbox{A}'_2$)), then we replace $v_{h}'(s)$ by
\begin{equation*}
v_{h}''(s):=v'_{h}(s)-\frac{c'_h}{c_g}\,  s^{o'(h)-o(g)} v_g(s)
\end{equation*}
in order to kill the coefficient $c'_h$ of $s^{o'(h)}$ in $v'_{h,i'(h)}$, and we repeat the above argument according to the cases ($\mbox{A}''_1$) and ($\mbox{A}''_2$) described below:
\begin{enumerate}
\item[($\mbox{A}''_1$)]
either $i''(h)\not=i(g)$ or $i''(h)=i(g)$ and the leading coefficients of $v''_{h,i''(h)}$ and $v_{g,i(g)}$  are linearly independent over $\mathbb{R}$;
\item[($\mbox{A}''_2$)]
$i''(h)=i(g)$ and the leading coefficients of $v''_{h,i''(h)}$ and $v_{g,i(g)}$  are linearly dependent over $\mathbb{R}$;
\end{enumerate}
where $i''(h)$ is the essential index of $v_{h}''(s)$.
We continue this process until we get a vector $v^{(k)}_{h}(s)$ whose essential index $i^{(k)}(h)$ is such that:
\begin{enumerate}
\item[($\mbox{A}^{(k)}_1$)]
either $i^{(k)}(h)\not=i(g)$ or $i^{(k)}(h)=i(g)$ but the leading coefficients of $v^{(k)}_{h,i^{(k)}(h)}$ and $v_{g,i(g)}$  are linearly independent over $\mathbb{R}$. 
\end{enumerate}
Then we can conclude just as in the case ($A_1'$). 
To see that this process terminates after a finite number $k$ of steps, observe that at each step (i.e., $v'(h),v''(h),\ldots$), the vectors ``corresponding'' to (4.16) are always linearly independent over $\mathbb{R}$. Therefore all the orders $o'(h),o''(h),\ldots$ remain less than or equal to $d_{\mathbf{w}}-w_{m+1}$, and since the sequence $o'(h),o''(h),\ldots$ is monotone increasing, the process must stop after a finite number $k$ of steps.

This completes the proof of the part of Theorem \ref{mt2} concerning the Whitney $(b)$-regularity condition. It remains to prove the part about the Thom condition. This is done in the next section.

\subsection{Thom's $a_f$ condition}\label{prooftc}
Although the argument is simpler, the idea to prove the Thom $a_f$ condition is  essentially the same as that used to obtain the Whitney $(b)$-regularity. 
First, by Proposition \ref{prop-suitearc2}, we observe that it suffices to show that for any $I\subseteq\{1,\ldots,n\}$, any $(\tau,\mathbf{q}):=(\tau,q_1,\ldots,q_n) \in V(f)\cap (\mathbb{C}\times \mathbb{C}^{*I})$ close enough to the origin, and any real analytic path 
\begin{align*}
(t(s),\mathbf{z}(s)):=(t(s),z_1(s),\ldots,z_n(s)) 
\end{align*}
with $(t(0),\mathbf{z}(0))=(\tau,\mathbf{q})$ and $(t(s),\mathbf{z}(s))\notin \Sigma f$ for $s\not=0$, the following condition on the tangent spaces holds true:
\begin{align}\label{lcamplct}
\lim_{{s\to 0}\atop{s\not=0}} T_{(t(s),\mathbf{z}(s))} V(f-f(t(s),\mathbf{z}(s),\bar{\mathbf{z}}(s))) \supseteq T_{(\tau,\mathbf{q})} (V(f)\cap (\mathbb{C}\times \mathbb{C}^{*I})),
\end{align}
provided that this limit exists. Hereafter we assume that it exists.
By the strong non-degeneracy condition, we may assume that $I\in\mathscr{I}_{v}(f_0)$, so that 
\begin{equation*}
U\cap (V(f)\cap (\mathbb{C}\times \mathbb{C}^{*I}))=U\cap (\mathbb{C}\times \mathbb{C}^{*I})=C_I.
\end{equation*} 
(Indeed, if $I\in\mathscr{I}_{nv}(f_0)$, then $(\tau,\mathbf{q})$ is a mixed non-singular point of the mixed hypersurface $V(f)$, and in this case the relation (\ref{lcamplct}) is obviously satisfied.) Again, to simplify, we assume that $I=\{1,\ldots,m\}$ with $1\leq m\leq n-1$. (For $I=\emptyset$, see Remark \ref{rk-Ivide}.) 
We keep the same notation as in the previous section. In particular,
we set
\begin{align*}
t(s)=\tau+b_0s+\cdots
\quad\mbox{and}\quad
z_i(s)=a_i s^{w_i}+b_i s^{w_i+1}+\cdots,
\end{align*}
where $w_i=0$ and $a_i=q_i$ for $1\leq i\leq m$ while $w_i>0$ and $a_i\not=0$ for $i>m$. We also suppose that (\ref{assumptionwi}) holds true. 
Of course, for $1\leq i\leq n$, the expression (\ref{f1-w}) is still valid---that is,
\begin{equation}\label{f1-t}
\frac{\partial g}{\partial \bar{z}_i} (\zeta(s)) = \frac{\partial {(g_{\tau,\bar\tau})}_{\Delta_{\mathbf{w}}}}{\partial \bar{z}_i} (\mathbf{a},\bar{\mathbf{a}})\, s^{d_\mathbf{w}-w_i}+\cdots,
\end{equation}
with a similar expression for $h$.
By the uniform local tameness, if $(\tau,\mathbf{q})$ is close enough to the origin $(0,\mathbf{0})\in\mathbb{C}\times\mathbb{C}^n$, then the vectors in (\ref{pp}) are still linearly independent over $\mathbb{R}$ too. Then, again, we divide the proof into the cases ($\mbox{A}_1$) and ($\mbox{A}_2$) described in  \S \ref{powbrc}.

Under the assumption ($\mbox{A}_1$), the vectors $(\bar\partial g)_\infty$ and $(\bar\partial h)_\infty$, which are contained in $\{0\}\times\{\mathbf{0}\}\times \mathbb{C}^{n-m} \subseteq \mathbb{C}\times \mathbb{C}^{m}\times \mathbb{C}^{n-m}$, are linearly independent over $\mathbb{R}$. Therefore,
\begin{align*}
 \lim_{{s\to 0}\atop{s\not=0}} T_{(t(s),\mathbf{z}(s))} V(f-f(t(s),\mathbf{z}(s),\bar{\mathbf{z}}(s))) 
= (\bar{\partial} g)_\infty^{\bot} \cap (\bar{\partial} h)_\infty^{\bot}.
\end{align*}
As $(\bar{\partial} g)_\infty^{\bot} \cap (\bar{\partial} h)_\infty^{\bot}$ contains $\mathbb{C}\times \mathbb{C}^{m}\times \{\mathbf{0}\}=\mathbb{C}\times \mathbb{C}^I$, the relation (\ref{lcamplct}) holds true in this case.

Under the assumption ($\mbox{A}_2$), as above, we replace $v_h(s)$ by $v_{h}'(s)$, and we consider the two possibilities ($\mbox{A}'_1$) and ($\mbox{A}'_2$) described in \S \ref{powbrc}. In the first case, the vectors $(\bar{\partial} g)_\infty$ et $(\bar{\partial} h)'_\infty$ (which are contained in $\{0\}\times \{\mathbf{0}\}\times \mathbb{C}^{n-m}$) are linearly independent over $\mathbb{R}$, and hence
\begin{align*}
 \lim_{{s\to 0}\atop{s\not=0}} T_{(t(s),\mathbf{z}(s))} V(f-f(t(s),\mathbf{z}(s),\bar{\mathbf{z}}(s)))  
= (\bar{\partial} g)_\infty^{\bot} \cap {(\bar{\partial} h)'_\infty}^{\bot}.
\end{align*}
Again as $(\bar{\partial} g)_\infty^{\bot} \cap {(\bar{\partial} h)'_\infty}^{\bot} \supseteq \mathbb{C}\times \mathbb{C}^{m}\times \{\mathbf{0}\}=\mathbb{C}\times \mathbb{C}^I$, the relation (\ref{lcamplct}) holds true.
In the second case, as in \S \ref{powbrc}, we repeat the argument until we get a vector $v^{(k)}_{h}(s)$ whose essential index $i^{(k)}(h)$ satisfies the condition ($\mbox{A}^{(k)}_1$).
Then we conclude by an argument similar to that used under the assumption ($\mbox{A}_1'$).

This completes the proof of Theorem \ref{mt2}.

\begin{remark}\label{rk-Ivide}
In the above proof, we have assumed $I\not=\emptyset$ (i.e., $m\geq 1$). However, a straightforward modification shows that the argument still works when $I=\emptyset$. Simply, observe that if $I=\emptyset$, then the face $\Delta_{\mathbf{w}}$ is compact, and hence, instead of the uniform local tameness, we must invoke the strong non-degeneracy. In particular, we obtain Theorem \ref{mt2-isolated}.
\end{remark}

\section{Examples of admissible families}

In this section, we give some examples of admissible families of mixed polynomial functions with non-isolated mixed singularities. Thus, by Theorem \ref{mt2}, all these families are Whitney equisingular and satisfy the Thom condition.

\subsection{A family of mixed curves with non-isolated mixed singularities}\label{example51} 
Consider the family given by the mixed polynomial function 
\begin{equation*}
f(t,z_1,z_2,\bar{z}_1,\bar{z}_2):=\bar{z}_1^2z_2^3+z_1^3\bar{z}_2^2+tz_1^2z_2^4.
\end{equation*}
Let $A=(2,3)$ and $B=(3,2)$. Clearly, for all small $t$, the mixed singular locus of $V(f_t)$ (i.e., the set of points of $V(f_t)$ which are critical points of the mixed polynomial function $f_t$) in a~suf\-ficiently small open neighbourhood of the origin in $\mathbb{C}^2$ contains the coordinates axes.
The non-compact Newton boundary $\Gamma_{nc}(f_t;\mathbf{z},\bar{\mathbf{z}})$, which is clearly independent of $t$, has two (compact) $0$-dimensional faces (the points $A$ and $B$), one compact $1$-dimensional face (the segment $\overline{AB}$), and two essential non-compact faces: $\Xi_1:=B+\mathbb{R}_+\mathbf{e}_1$ and $\Xi_2:=A+\mathbb{R}_+\mathbf{e}_2$.  See Figure \ref{fig2}, left-hand side. We easily check that for each compact face $\Delta$ and each $t$, the face function $(f_t)_{\Delta}$ has no critical point on $(\mathbb{C}^*)^2$ (i.e., $f_t$ is strongly non-degenerate).
We claim that for any $I\in\mathscr{I}_v(f_0)=\mathscr{I}_v(f_t)$, the family $\{f_t\}$ is uniformly locally tame along $\mathbb{C}^I$. Indeed, a trivial calculation shows that for any fixed $u_1\in\mathbb{C}^*$, the mixed polynomial function 
\begin{equation*}
z_2\mapsto(f_t)_{\Xi_1}(u_1,z_2,\bar{u}_1,\bar{z}_2):=u_1^3\bar{z}_2^2
\end{equation*}
of the variable $z_2$ has no critical point on $\mathbb{C}^{*\{1,2\}}_{u_1}$.
Similarly, for any fixed $u_2\in\mathbb{C}^*$ with $\vert u_2\vert<1/\vert t\vert$ (if $t\not=0$), the mixed polynomial function 
\begin{equation*}
z_1\mapsto(f_t)_{\Xi_2}(z_1,u_2,\bar{z}_1,\bar{u}_2):=\bar{z}_1^2u_2^3+tz_1^2u_2^4
\end{equation*}
of the variable $z_1$ has no critical point on $\mathbb{C}^{*\{1,2\}}_{u_2}$. So we can take
\begin{equation*}
r_{nc}(f_t)=\left\{
\begin{aligned}
1/\vert t\vert\mbox{ for } t\not=0,\\
\infty \mbox{ for } t=0,
\end{aligned}
\right.
\end{equation*}
and we have $r_{nc}(f_t)>\rho:=1$ for all $t$ with $\vert t\vert<1$. It follows that
the family $\{f_t\}$ is admissible. 

\begin{figure}[t]
\includegraphics[scale=1.8]{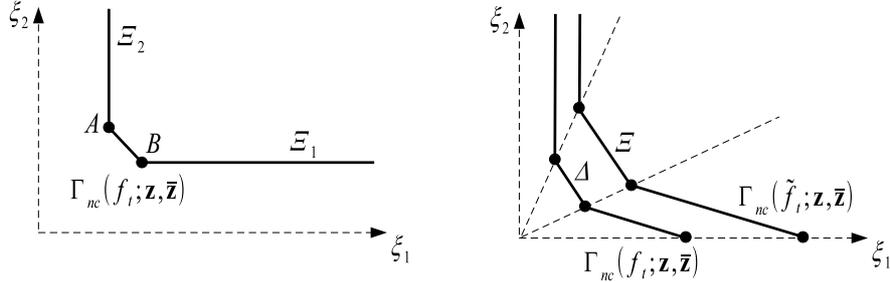}
\caption{Example \ref{example51} (left) and Example \ref{example53} (right)}
\label{fig2}
\end{figure}

\subsection{Families with ``big'' exponents for $t$-dependent monomials}
Let $\ell(\mathbf{z},\bar{\mathbf{z}})$ be a mixed polynomial function on $\mathbb{C}^n$ and let $k(t,\mathbf{z},\bar{\mathbf{z}})$ be a mixed polynomial function on $\mathbb{C}\times \mathbb{C}^n$. As usual, we write $k_t(\mathbf{z},\bar{\mathbf{z}}):=k(t,\mathbf{z},\bar{\mathbf{z}})$. Suppose that for all small $t$, 
\begin{equation*}
\Gamma_+(k_t;\mathbf{z},\bar{\mathbf{z}})\subseteq\Gamma_+(\ell;\mathbf{z},\bar{\mathbf{z}})
\quad\mbox{and}\quad
\Gamma_{nc}(k_t;\mathbf{z},\bar{\mathbf{z}})\cap\Gamma_{nc}(\ell;\mathbf{z},\bar{\mathbf{z}})=\emptyset. 
\end{equation*} 
Under this assumption, if $\ell$ is strongly non-degenerate and locally tame along its vanishing coordinates subspaces, then the family $\{f_t\}$ defined by 
\begin{equation*}
f_t(\mathbf{z},\bar{\mathbf{z}}):=\ell(\mathbf{z},\bar{\mathbf{z}})+k_t(\mathbf{z},\bar{\mathbf{z}})
\end{equation*} 
is admissible. For example, the family given by 
\begin{equation*}
f(t,z_1,z_2,\bar{z}_1,\bar{z}_2) :=
\bar{z}_1^2z_2^3+z_1^3\bar{z}_2^2+(1+t)z_1^3\bar{z}_2^3
\end{equation*} 
is admissible.

\subsection{Admissible families and mixed branched coverings}\label{example53} 
Take a positive integer $\delta\in\mathbb{N}^*$ and choose non-negative integer vectors $\mathbf{\nu}=(\nu_1,\ldots,\nu_n)$ and $\mathbf{\mu}=(\mu_1,\ldots,\mu_n)$ such that for all $1\leq i\leq n$:
\begin{enumerate}
\item
$\nu_i\in\mathbb{N}^*$ and $\mu_i\in\mathbb{N}$;
\item
$\nu_i>\mu_i$;
\item
 $\nu_i+\mu_i=\delta$;
\end{enumerate}
Then consider the mixed branched covering 
$\varphi\colon \mathbb{C}^n\to\mathbb{C}^n$
de\penalty 10000 fined~by 
\begin{equation*}
\mathbf{z}=(z_1,\ldots, z_n)\mapsto \varphi(\mathbf{z},\bar{\mathbf{z}}):=(z_1^{\nu_1}\bar{z}_1^{\mu_1}, \ldots, z_n^{\nu_n}\bar{z}_n^{\mu_n})
\end{equation*}
and whose ramification locus is given by the coordinates hyperplanes $z_i=0$ ($1\leq i\leq n$). In \cite[Proposition 22]{O1}, the second author showed that if $\ell(\mathbf{z})$ is a (strongly) non-degenerate \emph{holomorphic} polynomial function which is locally tame along $\mathbb{C}^I$ for any $I\in\mathscr{I}_v(\ell)$, then the mixed polynomial function
\begin{equation*}
\tilde \ell(\mathbf{z},\bar{\mathbf{z}}):=\ell\circ \varphi(\mathbf{z},\bar{\mathbf{z}})=\ell(z_1^{\nu_1}\bar{z}_1^{\mu_1}, \ldots, z_n^{\nu_n}\bar{z}_n^{\mu_n})
\end{equation*}
is strongly non-degenerate, 
$\mathscr{I}_v(\tilde \ell)=\mathscr{I}_v(\ell)$, and $\tilde \ell$ is locally tame along $\mathbb{C}^I$ for any $I\in\mathscr{I}_v(\tilde \ell)$. Actually, the proof shows that if $\{f_t\}$ is an admissible family of \emph{holomorphic} polynomial functions, then the family of mixed polynomial functions $\{\tilde f_t\}$ defined by 
\begin{equation*}
\tilde f_t(\mathbf{z},\bar{\mathbf{z}}) :=
f_t\circ \varphi(\mathbf{z},\bar{\mathbf{z}})
\end{equation*}
is admissible too.
Indeed, it is not difficult to see that the independence of $\Gamma_{nc}(f_t;\mathbf{z})$ with respect to $t$ implies that of $\Gamma_{nc}(\tilde f_t;\mathbf{z},\bar{\mathbf{z}})$. Also, it is easy to check that $\mathscr{I}_v(\tilde f_t)=\mathscr{I}_v(f_t)$ and $\mathscr{I}_{nv}(\tilde f_t)=\mathscr{I}_{nv}(f_t)$. To see that $\tilde f_t$ is strongly non-degenerate, we argue by contradiction. Take any compact face $\Xi\subseteq \Gamma(\tilde f_t;\mathbf{z},\bar{\mathbf{z}})$, and suppose that the face function $(\tilde f_t)_\Xi$ has a critical point $\mathbf{a}:=(a_1,\ldots,a_n)$ in $(\mathbb{C}^*)^n$, that is, there exists $\lambda\in\mathbb{S}^1$ such that  
\begin{equation}\label{degface}
\overline{\partial (\tilde f_t)_\Xi} (\mathbf{a},\bar{\mathbf{a}}) = 
\lambda \bar\partial (\tilde f_t)_\Xi (\mathbf{a},\bar{\mathbf{a}}).
\end{equation}
Clearly, 
\begin{equation*}
(\tilde f_t)_\Xi(\mathbf{z},\bar{\mathbf{z}})=(f_t)_\Delta\circ\varphi(\mathbf{z},\bar{\mathbf{z}})=(f_t)_\Delta(z_1^{\nu_1}\bar{z}_1^{\mu_1}, \ldots, z_n^{\nu_n}\bar{z}_n^{\mu_n}), 
\end{equation*}
where $\Delta$ is the compact face of $\Gamma(f_t;\mathbf{z})$ corresponding to $\Xi$, that is, if $\Delta\cap\mathbb{N}^n=\{(\alpha_1,\ldots,\alpha_n),(\beta_1,\ldots,\beta_n),\ldots\}$, then $\Xi\cap\mathbb{N}^n=\{\delta(\alpha_1,\ldots,\alpha_n),\delta(\beta_1,\ldots,\beta_n),\ldots\}$. See Figure \ref{fig2}, right-hand side. 
Therefore, by (\ref{degface}), for all $1\leq i\leq n$,
\begin{equation*}
\bar{a}_i^{\mu_i-1}a_i^{\mu_i} \biggl( \nu_i\bar{a}_i^{\nu_i-\mu_i} \overline{\frac{\partial (f_t)_\Delta}{\partial z_i}}(\varphi(\mathbf{a},\bar{\mathbf{a}})) - \lambda \mu_i a_i^{\nu_i-\mu_i} \frac{\partial (f_t)_\Delta}{\partial z_i}(\varphi(\mathbf{a},\bar{\mathbf{a}}))\biggr)=0.
\end{equation*}
As $a_i\not=0$, this implies
\begin{equation}\label{relex53}
\nu_i\, \vert\bar{a}_i\vert^{\nu_i-\mu_i}\, \bigg\vert\overline{\frac{\partial (f_t)_\Delta}{\partial z_i}}(\varphi(\mathbf{a},\bar{\mathbf{a}}))\bigg\vert = \mu_i \, \vert\lambda\vert\,  \vert a_i\vert^{\nu_i-\mu_i}\,  \bigg\vert\frac{\partial (f_t)_\Delta}{\partial z_i}(\varphi(\mathbf{a},\bar{\mathbf{a}}))\bigg\vert.
\end{equation}
As $f_t$ is (strongly) non-degenerate,
\begin{equation*}
\frac{\partial (f_t)_\Delta}{\partial z_i}(\varphi(\mathbf{a},\bar{\mathbf{a}})) \not=0,
\end{equation*}
and therefore (\ref{relex53}) implies $\nu_i=\mu_i$, which contradicts the above assumption (2). 

\begin{claim} 
The family $\{\tilde f_t\}$ is uniformly locally tame along the vanishing coordinates subspaces.
\end{claim}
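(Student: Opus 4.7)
The plan is to deduce the uniform local tameness of $\{\tilde f_t\}$ from that of $\{f_t\}$ via the chain rule. Since $\nu_i+\mu_i=\delta$ for every $i$, each monomial $\mathbf{w}^\alpha$ of $f_t$ pulls back under $\varphi$ to $\mathbf{z}^{\nu\alpha}\bar{\mathbf{z}}^{\mu\alpha}$ with mixed exponent $\delta\alpha$. Hence $\Gamma_+(\tilde f_t;\mathbf{z},\bar{\mathbf{z}})=\delta\cdot\Gamma_+(f_t;\mathbf{z})$, and the essential non-compact faces of $\tilde f_t$ are in natural bijection with those of $f_t$, preserving the non-compact direction and related by $(\tilde f_t)_{\tilde\Xi}=(f_t)_{\Xi}\circ\varphi$. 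Let $\rho>0$ be the uniform local tameness radius of $\{f_t\}$ granted by admissibility.

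Fix an essential non-compact face $\tilde\Xi$ with $I:=I_{\tilde\Xi}=\{i_1,\ldots,i_m\}$. I would argue by contradiction: suppose $\mathbf{a}=(a_1,\ldots,a_n)\in\mathbb{C}^{*\{1,\ldots,n\}}_{u_{i_1},\ldots,u_{i_m}}$ is a critical point of $(\tilde f_t)_{\tilde\Xi}$ as a mixed function of the free variables $z_i$, $i\notin I$. Exploiting the holomorphicity of $f_t$ in the $w$-variables (so that $\partial f_t/\partial\bar w_k\equiv 0$), the chain rule gives, for each $i\notin I$,
\begin{equation*}
\frac{\partial (\tilde f_t)_{\tilde\Xi}}{\partial z_i}=\nu_i\, z_i^{\nu_i-1}\bar z_i^{\mu_i}\cdot\Bigl(\frac{\partial (f_t)_\Xi}{\partial w_i}\circ\varphi\Bigr),\qquad
\frac{\partial (\tilde f_t)_{\tilde\Xi}}{\partial \bar z_i}=\mu_i\, z_i^{\nu_i}\bar z_i^{\mu_i-1}\cdot\Bigl(\frac{\partial (f_t)_\Xi}{\partial w_i}\circ\varphi\Bigr).
\end{equation*}
Plugging these into the critical point equation $\overline{\partial(\tilde f_t)_{\tilde\Xi}/\partial z_i}(\mathbf{a},\bar{\mathbf{a}})=\lambda\,\partial(\tilde f_t)_{\tilde\Xi}/\partial\bar z_i(\mathbf{a},\bar{\mathbf{a}})$ with $|\lambda|=1$, and taking absolute values, yields
\begin{equation*}
(\nu_i-\mu_i)\,|a_i|^{\delta-1}\,\bigl|\partial(f_t)_\Xi/\partial w_i(\varphi(\mathbf{a},\bar{\mathbf{a}}))\bigr|=0\qquad\text{for each } i\notin I.
\end{equation*}
Since $\nu_i>\mu_i$ and $a_i\neq 0$, this forces $\partial(f_t)_\Xi/\partial w_i(\varphi(\mathbf{a},\bar{\mathbf{a}}))=0$ for all $i\notin I$, so $\varphi(\mathbf{a},\bar{\mathbf{a}})$ is a critical point of the holomorphic function $(f_t)_\Xi$ in the variables $w_i$, $i\notin I$, with fixed non-zero values $w_{i_j}=u_{i_j}^{\nu_{i_j}}\bar u_{i_j}^{\mu_{i_j}}\in\mathbb{C}^*$.

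To conclude, set $\tilde\rho:=\min(\rho,1)>0$. If $|u_{i_1}|^2+\cdots+|u_{i_m}|^2<\tilde\rho^2\le 1$, then each $|u_{i_j}|\leq 1$, so $|w_{i_j}|=|u_{i_j}|^\delta$ satisfies $|w_{i_j}|^2\leq|u_{i_j}|^2$ (since $\delta\geq 1$), and hence $\sum_j|w_{i_j}|^2<\tilde\rho^2\leq\rho^2\leq r_{nc}(f_t)^2$. This contradicts the uniform local tameness of the holomorphic family $\{f_t\}$. Therefore $r_{nc}(\tilde f_t)\geq\tilde\rho$ for all small $t$, and $\{\tilde f_t\}$ is uniformly locally tame along its vanishing coordinates subspaces. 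The only genuine bookkeeping obstacle is the chain-rule computation together with the exponent conversion $|w_{i_j}|=|u_{i_j}|^\delta$ needed to transfer the smallness bound from the $u$-coordinates to the $w$-coordinates; beyond this, the argument is a straightforward reduction to the already-assumed admissibility of $\{f_t\}$.
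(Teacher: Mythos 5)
Your proof is correct and follows essentially the same approach as the paper: argue by contradiction, use the chain rule to reduce the critical-point equation for $(\tilde f_t)_{\tilde\Xi}$ to a scalar identity involving $\nu_i$, $\mu_i$ and $\partial(f_t)_\Xi/\partial w_i(\varphi(\mathbf{a},\bar{\mathbf{a}}))$, and transfer the smallness bound from the $u$-coordinates to the $w$-coordinates via $|u|^\delta\le|u|$ for $|u|\le 1$ (the paper simply assumes WLOG $\rho<1$ rather than introducing $\tilde\rho=\min(\rho,1)$). The only cosmetic difference is that you phrase the final contradiction as ``$\varphi(\mathbf{a},\bar{\mathbf{a}})$ is a critical point of $(f_t)_\Xi$, contradicting local tameness,'' whereas the paper uses local tameness to pick an index $i$ with $\partial(f_t)_\Delta/\partial z_i\neq 0$ and then derives $\nu_i=\mu_i$, contradicting the hypothesis $\nu_i>\mu_i$.
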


\begin{proof}
By hypothesis, we know that the family of holomorphic polynomial functions $\{f_t\}$ is uniformly locally tame along the vanishing coordinates subspaces (i.e., $f_t$ is locally tame along $\mathbb{C}^I$ for any $I\in\mathscr{I}_v(f_t)=\mathscr{I}_v(f_0)$ and there exists $\rho>0$ such that $r_{nc}(f_t)\geq \rho$ for all small $t$). Without loss of generality, we may assume that $\rho<1$. Consider a subset $I\in\mathscr{I}_v(\tilde f_t)=\mathscr{I}_v(\tilde f_0)$. For simplicity, let us assume that $I=\{1,\ldots,m\}$. Let $u_1,\ldots,u_m$ be non-zero complex numbers such that 
\begin{equation}\label{inegdui}
\vert u_1 \vert^2 + \cdots + \vert u_m \vert^2\leq \rho^2.
\end{equation}
Take any essential non-compact face $\Xi\in\Gamma_{nc}(\tilde f_t;\mathbf{z},\bar{\mathbf{z}})$ such that $I_\Xi=I$, and consider the corresponding face $\Delta\in\Gamma_{nc}(f_t;\mathbf{z})$. (Note that $I_\Delta=I$ too.) We want to show that the face function $(\tilde f_t)_\Xi$ has no critical point on $\mathbb{C}^{*\{1,\ldots,n\}}_{u_1,\ldots,u_m}$ as a mixed polynomial function of the variables $z_{m+1},\ldots, z_n$. Again, we argue by contradiction. Suppose $(u_1,\ldots,u_m,a_{m+1},\ldots,a_n)$ is a critical point. Then, reminding Notation \ref{ltrunc}, there exists $\lambda\in\mathbb{S}^1$ such that for all $m+1\leq i\leq n$,
\begin{equation}\label{getcontradiction}
\begin{aligned}
\bar{a}_i^{\mu_i-1}a_i^{\mu_i} \biggl( \nu_i\bar{a}_i^{\nu_i-\mu_i} 
& \overline{\frac{\partial (f_t)_\Delta}{\partial z_i}}(\varphi(\mathbf{u}_1^m,\mathbf{a}_{m+1}^n,\bar{\mathbf{u}}_1^m,\bar{\mathbf{a}}_{m+1}^n)) - \\
& \lambda \mu_i a_i^{\nu_i-\mu_i} \frac{\partial (f_t)_\Delta}{\partial z_i}(\varphi(\mathbf{u}_1^m,\mathbf{a}_{m+1}^n,\bar{\mathbf{u}}_1^m,\bar{\mathbf{a}}_{m+1}^n))\biggr)=0.
\end{aligned}
\end{equation}
As $\rho<1$, the inequality (\ref{inegdui}) implies
\begin{equation*}
\vert u_1^{\nu_1}\bar{u}_1^{\mu_1}\vert^2 + \ldots + 
\vert u_m^{\nu_m}\bar{u}_m^{\mu_m}\vert^2 =
\vert u_1^\delta \vert^2 + \cdots + \vert u_m^\delta \vert^2\leq \rho^2,
\end{equation*}
and since $\{f_t\}$ is uniformly locally tame,
\begin{align*}
\frac{\partial (f_t)_\Delta}{\partial z_i}(\varphi(\mathbf{u}_1^m,\mathbf{a}_{m+1}^n,\bar{\mathbf{u}}_1^m,\bar{\mathbf{a}}_{m+1}^n))\not=0.
\end{align*}
Therefore, as above, the relation (\ref{getcontradiction}) implies $\nu_i=\mu_i$, which is a contradiction.
\end{proof}

Altogether, the family $\{\tilde f_t\}$ is admissible.

\bibliographystyle{amsplain}

\begin{thebibliography}{10}

\bibitem {C} Y. Chen, ``Ensembles de bifurcation des polyn\^omes mixtes et poly\`edres de Newton,'' PhD Thesis, Univ. of Lille 1, 2012.

\bibitem {EO} C. Eyral and M. Oka, ``Non-compact Newton boundary and Whitney equisingularity for non-isolated singularities,'' to appear---available on arXiv:1512.04248v1 [math.AG] 14 Dec 2015.

\bibitem {GWPL} C. Gibson, K. Wirthm\"uller, A. du Plessis, E. Looijenga, ``Topological stability of smooth mappings,'' Lecture Notes in Mathematics \textbf{552}, Springer-Verlag, Berlin-New York, 1976.

\bibitem {Milnor} J. Milnor, ``Singular points of complex hypersurfaces,'' Annals of Math. Studies \textbf{61}, Princeton Univ. Press, Princeton, N. J., Univ. Tokyo Press, Tokyo, 1968.

\bibitem {O2} M. Oka, ``Canonical stratification of nondegenerate complete intersection varieties,'' J. Math. Soc. Japan  \textbf{42}  (1990),  no. 3, 397--422.

\bibitem {O7} M. Oka, ``Topology of polar weighted homogeneous hypersurfaces,''
Kodai Math. J. \textbf{31} (2008), no. 2, 163--182.

\bibitem {O3} M. Oka, ``Non-degenerate mixed functions,'' Kodai Math. J. \textbf{33} (2010), no. 1, 1--62.

\bibitem {O1} M. Oka, ``On Milnor fibrations of mixed functions, $a_f$-condition and boundary stability,'' Kodai Math. J. \textbf{38} (2015), no. 3, 581--603.

\bibitem {Whitney} H. Whitney, ``Tangents to an analytic variety,'' Ann. of Math. (2)  \textbf{81} (1965) 496--549.

\end{thebibliography}

\end{document}